\title{On linear series with negative Brill-Noether number}
\author{Nathan Pflueger}
\documentclass{article}
\usepackage{fullpage}
\usepackage{amsmath}
\usepackage{amsthm}
\usepackage{amssymb}
\usepackage{array}
\usepackage{delarray}
\usepackage[dvips]{graphics}
\usepackage{epsfig}
\usepackage{color}
\usepackage{fancyvrb}
\usepackage{tikz}
\usetikzlibrary{decorations.pathreplacing}

\usepackage{setspace}

\newcommand{\cc}{\mathcal{C}}

\newcommand{\co}{\mathcal{O}}
\newcommand{\cl}{\mathcal{L}}

\newcommand{\cg}{\mathcal{G}}
\newcommand{\fg}{\mathfrak{g}}
\newcommand{\cm}{\mathcal{M}}
\newcommand{\cw}{\mathcal{W}}

\newtheorem{thm}{Theorem}[section]
\newtheorem{lemma}[thm]{Lemma}
\newtheorem{prop}[thm]{Proposition}
\newtheorem{cor}[thm]{Corollary}

\theoremstyle{definition}
\newtheorem{defn}[thm]{Definition}

\theoremstyle{remark}
\newtheorem{remark}[thm]{Remark}

\newtheorem{eg}[thm]{Example}
\newtheorem{conj}[thm]{Conjecture}
\newtheorem{qu}[thm]{Question}

\begin{document}
\maketitle

\begin{abstract}
Brill-Noether theory studies the existence and deformations of curves in projective spaces; its basic object of study is $\cw^r_{d,g}$, the moduli space of smooth genus $g$ curves with a choice of degree $d$ line bundle having at least $(r+1)$ independent global sections. The Brill-Noether theorem asserts that the map $\cw^r_{d,g} \rightarrow \cm_g$ is surjective with general fiber dimension given by the number $\rho = g - (r+1)(g-d+r)$, under the hypothesis that $0 \leq \rho \leq g$. One may naturally conjecture that for $\rho < 0$, this map is generically finite onto a subvariety of codimension $-\rho$ in $\cm_g$. This conjecture fails in general, but seemingly only when $-\rho$ is large compared to $g$. This paper proves that this conjecture does hold for at least one irreducible component of $\cw^r_{d,g}$, under the hypothesis that $0 < -\rho \leq \frac{r}{r+2} g - 3r+3$. We conjecture that this result should hold for all $0 < -\rho \leq g + C$ for some constant $C$, and we give a purely combinatorial conjecture that would imply this stronger result.
\end{abstract}



\section{Introduction}\label{intro}

Throughout this paper, a \textit{curve} will always mean a complete algebraic curve over $\textbf{C}$, with at worst nodes as singularities.

Brill-Noether theory studies the ways curves can lie in projective spaces. One of the principle objects of study is the moduli space $\cw^r_{d,g}$, which parameterizes curves of genus $g$ together with a chosen line degree $d$ line bundle with at least $(r+1)$ independent global sections. The geometry of this space is well-understood over general curves; in particular the Brill-Noether theorem \cite{GH} states that when $r \geq 0$ and $g-d+r \geq 1$ a general fiber of the map $\cw^r_{d,g} \rightarrow \cm_g$ is either empty or has dimension given by the \textit{Brill-Noether number}, traditionally denoted $\rho$ and defined as follows.

\begin{equation*}
\rho(g,d,r) = g - (r+1)(g-d+r)
\end{equation*}

Furthermore, the general fiber is empty if and only if $\rho \geq 0$. This paper considers the extension of the Brill-Noether theorem to the case $\rho < 0$, i.e. to non-general curves. The main result is the following.

\begin{thm}\label{mainTheorem}
Suppose that $g,d,r$ are positive integers with $g-d+r \geq 2$ and $0 > \rho \geq -\frac{r}{r+2} g + 3r-3$. Then $\cw^r_{d,g}$ has a component of dimension $\dim \cm_g + \rho$, whose image in $\cm_g$ has codimension equal to $-\rho$, and whose general member has rank exactly $r$.
\end{thm}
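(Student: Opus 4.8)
The plan is to prove the theorem by degenerating to a chain of loops and running a regeneration argument, the real content being a combinatorial estimate for tropical Brill--Noether loci on such a chain. Let $\Gamma$ be a chain of $g$ loops, where the two edges of loop $i$ have lengths encoding a ``torsion parameter'' $\xi_i\in\mathbf{R}/\mathbf{Z}$. By Cools--Draisma--Payne--Robeva, for a generic choice of the $\xi_i$ the locus $W^r_d(\Gamma)\subseteq\Pic^d(\Gamma)$ is stratified by lingering lattice paths of length $g$, has dimension $\rho$ when $\rho\ge 0$, and (combined with the Brill--Noether theorem) is empty when $\rho<0$. The idea is instead to place the $\xi_i$ on a special, codimension-$(-\rho)$ subvariety of the parameter space --- imposing $-\rho$ torsion relations among the $\xi_i$, spread out along the chain --- so that $W^r_d(\Gamma)$ becomes nonempty and zero-dimensional with a general member of rank exactly $r$; then a zero-dimensional, dimensionally transverse tropical linear series regenerates to produce a component of $\cw^r_{d,g}$ of the predicted dimension.

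In order, the steps are as follows. (1) \emph{Combinatorial core.} Refine the CDPR stratification to allow torsion relations: a relation imposed at loop $i$ permits a ``lingering'' step of the lattice path at position $i$. I would show that with $-\rho$ such relations placed sufficiently far apart, the lattice paths of length $g$ that now exist are \emph{rigid} --- the corresponding strata of $W^r_d(\Gamma)$ are nonempty and zero-dimensional --- while a general divisor in the top stratum has rank exactly $r$ and not $r+1$. (2) \emph{Bookkeeping.} The chains of loops admitting such a configuration of relations form a codimension-$(-\rho)$ subvariety of the parameter space; equivalently, via Baker's specialization lemma, the smooth curves whose tropicalization admits such a configuration form, near the maximally degenerate chain-of-loops point of $\overline{\cm}_g$, a subvariety of $\cm_g$ of codimension $-\rho$. (3) \emph{Regeneration.} Apply the smoothing theorem for limit/tropical linear series: because the tropical series produced are zero-dimensional and lie in the expected-dimensional locus, they lift to genuine $g^r_d$'s on the nearby smooth curves, and the local dimension of $\cw^r_{d,g}$ equals that of the tropical series variety plus that of the family of curves --- yielding a component $\cv$ with $\dim\cv=\dim\cm_g+\rho$ mapping generically finitely to a codimension-$(-\rho)$ image. (4) \emph{Rank.} A general point of $\cv$ specializes to a tropical divisor of rank exactly $r$, so Baker's specialization inequality $r(\cl_t)\le r(\operatorname{trop}\cl)$ forces its general member to carry a $g^r_d$ of rank exactly $r$.

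I expect step (1) to be the main obstacle and the source of the hypothesis $0>\rho\ge -\tfrac{r}{r+2}g+3r-3$. The vanishing sequence of a $g^r_d$ at a point lies in a box of side lengths $r+1$ and $g-d+r$, and a lattice path realizing a deficit $-\rho$ must distribute $-\rho$ lingering steps among $g$ positions so that no two of them push the path off the grid or collapse its stratum to positive dimension; optimizing this packing problem is exactly what produces a bound that is linear in $g$ with slope $\tfrac{r}{r+2}$, the additive term $-(3r-3)$ reflecting boundary effects near the ends of the chain. A secondary point is to ensure $\cv$ is reduced of the expected dimension so that the regeneration count is an equality rather than an inequality, and that no other component of $\cw^r_{d,g}$ through these curves is larger; both follow from Baker's specialization lemma applied once more to the same chain, which bounds $\dim\cw^r_{d,g}$ from above near the constructed point.
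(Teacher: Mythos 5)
Your strategy is recognizably cognate to the paper's: a chain of $g$ loops with torsion conditions on the $\xi_i$ is the tropical shadow of the paper's chain of elliptic bridges, where at each step an elliptic curve $E$ is attached with $p_2-q$ chosen to be torsion of a prescribed order so as to realize a chosen arithmetic progression of vanishing orders. But as written the proposal has two genuine gaps, and the second one is structural rather than a matter of detail.

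First, step (1) --- the combinatorial core --- is only asserted, not performed. The entire numerical content of the theorem, namely that the construction works precisely when $0>\rho\ge -\frac{r}{r+2}g+3r-3$, has to come out of your packing problem for lingering steps, and you give no argument for why the answer has slope $\frac{r}{r+2}$ and constant term $-(3r-3)$ rather than some other linear bound (or no linear bound at all). In the paper this is exactly the hard part: one must exhibit an explicit valid sequence of displacements building the box partition $((g-d+r)^{r+1})$ using at most $a+3b-5$ single-box steps (Lemma \ref{boxlike}), and the proof is a nontrivial explicit construction of intermediate partitions. Without your analogue of that construction there is no theorem. Second, your regeneration step leans on the wrong tool. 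Baker's specialization inequality $r(\cl_t)\le r(\operatorname{trop}\cl)$ only transfers information from the smooth curve to the graph; it proves \emph{non}-existence and upper bounds on dimensions, and it cannot manufacture a $\fg^r_d$ on a nearby smooth curve from a tropical one. For that you need a lifting theorem, and the known lifting results for chains of loops assume \emph{generic} edge lengths/torsion --- precisely the regime you have left by imposing $-\rho$ torsion relations. To make step (3) rigorous you would have to either prove a new lifting theorem for torsion-constrained chains of loops, or pass to the corresponding semistable model (a chain of elliptic curves) and invoke the Eisenbud--Harris regeneration theorem for refined limit linear series --- at which point you have reconstructed the paper's argument, including its verification that each elliptic aspect is dimensionally proper (the content of Lemma \ref{bridgeLemma} and Proposition \ref{displace}), which your outline does not address.
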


The assumptions $r \geq 1$ and $g-d+r \geq 2$ are necessary: if $g-d+r \leq 0$ then $\rho > 0$, while if $g-d+r = 1$ or $r=0$ (these situations are dual to each other) then $\cw^r_{d,g}$ is empty if $\rho < 0$.

Note that $\dim \cm_g + \rho$ is a lower bound on the dimension of any component of $\cw^r_{d,g}$ (we will see one proof in section \ref{twisted}, by combining formula \ref{weightBound} and lemma \ref{twpToBN}), so this theorem asserts that this bound is achieved for not-too-negative values of $\rho$. 

The proof proceeds by induction on the genus. The statement of \ref{mainTheorem} is not suitable for induction; we instead introduce the notion of \textit{twisted Weierstrass points}, and prove a suitable generalization in this context. The method is based on the limit linear series techniques introduced by Eisenbud and Harris \cite{EH87} to construct certain Weierstrass points. As a second application of our techniques, we also prove that the naive dimension estimate for the number of moduli of a Weiestrass point always fails when the Semigroup does not satisfy a combinatorial condition called primitivity (Theorem \ref{Imprim}).


The outline of this paper is as follows. Section \ref{context} discusses background and previous results, and states some conjectures. Section \ref{twisted} introduced the notion of a \textit{twisted Weierstrass point} corresponding to a partition $P$; we define moduli spaces $\cw_g(P)$ of twisted Weierstrass points on genus $g$ curves and show that studying $\fg^r_d$s on genus $g$ curves is equivalent to studying twisted Weierstrass points on genus $g$ curves corresponding to the ``box-shaped'' partition $((g-d+r)^{r+1})$. Section \ref{bridges} describes a construction, using the theory of limit linear series, of twisted Weierstrass points in genus $g+1$ from twisted Weierstrass points in genus $g$. Section \ref{difficulty} defines a combinatorial invariant called the \textit{difficulty} of a partition, and shows how bounding this invariant implies the existence of dimensionally proper twisted Weierstrass points. Sections \ref{primitive} and \ref{linearSeries} demonstrate this technique by bounding the difficulty of two different sorts of partitions. Section \ref{primitive} reproves a theorem of Eisenbud and Harris on dimensionally proper Weierstrass points\footnote{The proof is essentially the same as theirs, although we circumvent the analysis of limit canonical series.}, and then proves theorem \ref{Imprim}, showing that a primitivity hypothesis in that theorem cannot be removed. Finally, section \ref{linearSeries} gives a bound on the difficulty of box-shaped partitions sufficient to prove theorem \ref{mainTheorem}.

\section{Background and conjectures}\label{context}





As the numbers $g,d,r$ vary (constrained by $g-d+r \geq 2$), the spaces $\cw^r_{d,g}$ exhibit two very different sorts of behavior. For $0 \leq \rho \leq g$, the situation is well-understood: $\cw^r_{d,g}$ is irreducible, maps subjectively to $\cm_g$, and has general fiber of dimension $\rho$. On the other hand, when $-\rho \gg 0$ the dimension estimate $(3g-3) + \rho$ fails dramatically. Indeed, many natural families of curves (such as complete intersections, determinantal curves, and curves on rational surfaces) have degree and genus such that $\rho$ is extremely negative, and yet these families have rather large dimension. This phenomenon, observed in numerous examples, has led to the following folklore conjecture, sometimes called the \textit{rigid curves conjecture.}\footnote{This conjecture is usually phrased in terms of components of the Hilbert scheme, but this form is essentially the same.}

\begin{conj}
For all $r$, there is a positive number $C(r)$ such that whenever $\cw^r_{d,g}$ is nonempty, all of its components have dimension at least $C(r)g$.
\end{conj}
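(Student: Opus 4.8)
The plan is to split the problem according to the size of $-\rho$. As noted after Theorem~\ref{mainTheorem}, every component of $\cw^r_{d,g}$ has dimension at least $3g-3+\rho$, so for any fixed $\delta>0$ the conjecture already holds, with $C(r)=\delta$, whenever $-\rho\le(3-\delta)g$. Thus the mild range — in particular the range $-\rho\le g+C$ in which we expect Theorem~\ref{mainTheorem} to extend — costs nothing, and the genuine content of the rigid curves conjecture lies in the regime of very small $d$, where $3g-3+\rho$ is negative but $\cw^r_{d,g}$ can still be nonempty: this already happens for every $r\ge 4$ and all $d\ge 2r$, since $\cw^r_{d,g}$ then contains a component lying over the hyperelliptic locus (take $r$ times a $\fg^1_2$, plus base points).

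For this small-$d$ regime the plan is to exploit classical Clifford-type constraints: a curve carrying a $\fg^r_d$ with $g-d+r\ge 2$ has Clifford index at most $d-2r$, and imposing $r-1$ general point conditions on such a series and removing base points exhibits it as $k$-gonal for some $k\le d-r+1$. When $d$ has order $g/r$ or less, these bounds force every curve of $\cw^r_{d,g}$ into a gonality stratum $\cm^1_{g,k}$ of dimension $2g+2k-5$, and one then seeks a matching lower bound on each component of $\cw^r_{d,g}$: if a component dominates such a stratum its dimension is already at least $2g-1$, while otherwise it maps to a proper subvariety $Y$ of the stratum over whose general point the fibre, being a component of a Brill--Noether locus, has dimension at least the expected value $\rho$, so that bounding $\dim Y$ from below — using that the curves in $Y$ must still carry the prescribed positive-dimensional family of $\fg^r_d$'s — controls the total dimension. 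Carried out carefully, this should give every component dimension at least $C(r)g$ for all $d$ below a threshold of order $g/r$.

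The serious obstacle is the intermediate range, where $g-d+r$ is large compared to $g/r$ but $d$ is still well below $g$: here neither the Clifford/gonality constraint (which no longer confines the curve to a low-dimensional stratum) nor the universal estimate $3g-3+\rho$ is strong enough. This is where the constructive half of the paper is meant to contribute: chaining the genus $g\rightsquigarrow g+1$ bridges of Section~\ref{bridges} produces components of the conjecturally optimal dimension $3g-3+\rho$ as soon as one can bound the \emph{difficulty} of the box-shaped partition $((g-d+r)^{r+1})$, and the purely combinatorial conjecture of Section~\ref{linearSeries} would supply exactly such a bound throughout $-\rho\le g+C$. What still would not follow is the exclusion of components of dimension strictly between $3g-3+\rho$ and $C(r)g$ for intermediate $d$ — equivalently, ruling out a component whose image in $\cm_g$ is simultaneously of small dimension and swept out by curves possessing unexpectedly large families of $\fg^r_d$'s. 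That last step appears to require an input genuinely beyond the methods developed here, which is presumably why the conjecture is left open.
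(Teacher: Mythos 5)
The statement you are addressing is the folklore \emph{rigid curves conjecture}. The paper does not prove it --- it records it as an open conjecture, and the paper's actual contribution (Theorem \ref{mainTheorem}) only produces \emph{one} well-behaved component of $\cw^r_{d,g}$ in a restricted range of $\rho$, saying nothing about the dimensions of all the other components, which is precisely what this conjecture is about. So there is no proof in the paper to compare yours against; and your proposal, by your own admission in its final sentence, is a programme with an acknowledged hole rather than a proof. That is the honest conclusion, but several steps of your sketch read as if more were established than actually is, so let me pin down the gaps.

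First, the ``mild range'' step is fine only up to an additive constant: $-\rho\le(3-\delta)g$ gives every component dimension at least $\delta g-3$, not $\delta g$, so you must shrink the constant or treat small $g$ separately --- trivial, but it should be said. Second, and seriously, the small-$d$ argument does not close. Confining every curve of $\cw^r_{d,g}$ to a gonality stratum bounds the image in $\cm_g$ from \emph{above}; the conjecture needs a lower bound on the component itself, i.e.\ a lower bound on $\dim Y$ plus the fibre dimension. The fibre over a point of $Y$ is a union of components of $W^r_d(C)$, and the only universal lower bound on a nonempty component of that determinantal locus is $\max(\rho,0)$, which is useless here. Nothing in your sketch excludes a component of $\cw^r_{d,g}$ lying over a very low-dimensional (even zero-dimensional) $Y$ with only a modest family of $\fg^r_d$'s on each curve; ruling this out is exactly the content of the conjecture, and the phrase ``bounding $\dim Y$ from below --- using that the curves in $Y$ must still carry the prescribed positive-dimensional family'' names the problem rather than solving it. (The paper flags the same blind spot for its own method, which ``cannot detect components of $\cw^r_{d,g}$ whose images in $\cm_g$ are compact.'') Third, even granting Conjecture \ref{boxConjecture}, the machinery of Sections \ref{bridges}--\ref{linearSeries} only \emph{constructs} a dimensionally proper component; it has no mechanism for bounding other components from below. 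So the conjecture remains genuinely open in every regime outside the mild range, not only the intermediate one, and your write-up should present itself as a reduction and a survey of obstacles rather than as a proof.
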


Observe that since $\dim \cm_g$ is of course $3g-3$, and a genus $g$ curve has a a $g$-dimensional space of line bundles of degree $d$, the dimension of $\cw^r_{d,g}$ is always less than $4g$. So another way to state this conjecture is the following: for there is a positive number $C(r)$ such that whenever $\cw^r_{d,g}$ is nonempty,

\begin{equation*}
C(r) < \frac{\dim \cw^r_{d,g}}{g} < 4
\end{equation*}

(and the same is true for each irreducible component of $\cw^r_{d,g}$).

This conjecture predicts that there is a sort of ``phase transition'' as $\rho$ moved from slightly negative values to very negative values, where the Brill-Noether dimension estimate begins to fail and the natural tendency of embedded curves to vary in families of dimension linear in $g$ (in addition to the $\dim PGL_{r+1}$ degrees of freedom from projective space itself) begins to dominate. The question this paper aims to address is: \textit{where does this phase transition occur}?

Anecdotal evidence suggests that the transition occurs at a constant multiple of $g$. For example, the simplest case of an embedded curve violating the Brill-Noether dimension estimate is the complete intersection of a quadric and a quartic surfaces in $\textbf{P}^3$. In this case, $(g,d,r) = (9, 8, 3)$ so $\rho = -7$ and the expected dimension of $\cw^3_{8,9}$ is $17$, but an elementary calculation shows that in fact $\dim \cw^r_{8,9} = 18$. So this counterexample occurs at $\rho = -g+2$.

Eisenbud and Harris \cite{EH89} proved that when $\rho = -1$, the space $\cw^r_{d,g}$ is irreducible of the expected dimension, and that its image in $\cm_g$ is a divisor. Edidin \cite{E} showed that in the case $\cw^r_{d,g}$ has all components of the expected dimension, mapping finitely to $\cm_g$. Eisenbud and Harris claimed in their initial paper on limit linear series \cite{EH86} a result of the same form as our theorem \ref{mainTheorem} was forthcoming, but never published a proof.

Our theorem \ref{mainTheorem} gives further evidence that the phase transition occurs in the vicinity of $\rho = -g$. Its main defect is that it only asserts the existence of some component of $\cw^r_{d,g}$ that behaves as expected. The reason for this restriction is that our method of proof proceeds by smoothing certain reducible curves; this method cannot detect and components of $\cw^r_{d,g}$ whose images in $\cm_g$ are compact.

Our theorem \ref{Imprim} is the first step towards answering a different but very analogous question about Weierstrass points. Since non-primitive semigroups occur in every genus with weights as low as roughly $\frac12 g$, this shows that the analogous phase transition for Weierstrass points seems to occur when the expected codimension is roughly $\frac12 g$. We elaborate considerably on the analogous questions for Weierstrass points in \cite{EW}.

We conclude this section with a general conjecture uniting questions about $\cw^r_{d,g}$ with questions about Weierstrass points. See the following section for the definition of $\cw_g(P)$ and  an explanation of how it is related to $\cw^r_{d,g}$.

\begin{conj}
Let $P$ be a partition and $g$ a positive integer. Let $X$ be any component of $\cw_g(P)$, regarded as a subvariety of $\mathcal{P}\textrm{ic}^0_g \times_{\cm_g} \cm_{g,1}$. There exist two positive functions $A(r)$ and $B(r)$ of $r$ with $0 < A(r) < B(r) < 4$, such that:
\begin{itemize}
\item $\textrm{codim} X \leq \min(B(r)g, |P|)$.
\item If $|P| \leq A(r) g$, then $\textrm{codim} X = |P|$.
\end{itemize}
\end{conj}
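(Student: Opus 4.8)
The strategy is to run the same machine that produces Theorem \ref{mainTheorem}, now for an arbitrary partition in place of a box. By the dictionary of Section \ref{twisted}, a component $X$ of $\cw_g(P)$ is a family of twisted Weierstrass points of type $P$, and one studies it by the genus induction of Section \ref{bridges}: attach a bridge to pass from genus $g-1$ to genus $g$, track how $P$ must change under the attachment, and measure the codimension lost at each step by the \emph{difficulty} invariant of Section \ref{difficulty}, whose defining property is that a small enough bound on the difficulty of $P$ yields dimensionally proper twisted Weierstrass points. The half of the conjecture asserting $\textrm{codim}\, X \le |P|$ --- equivalently that $\cw_g(P)$ never drops below the expected dimension --- should be the easy one: in the box-shaped case the bound $\dim\cm_g + \rho$ is obtained (Section \ref{twisted}) by combining formula \ref{weightBound} with lemma \ref{twpToBN}, an argument that is local at the marked point and so, I expect, carries over to an arbitrary $P$ with no essential change. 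I would verify this first.

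The content is then entirely in producing a component of the correct small codimension --- exactly $|P|$ when $|P| \le A(r) g$, and at most $B(r)g$ always --- and by the principle of Section \ref{difficulty} this reduces to the purely combinatorial assertion the abstract advertises: that the difficulty of any partition with $r+1$ parts is at most $|P|$ in the first range and at most $B(r)g$ in the second. This is the main obstacle. Sections \ref{primitive} and \ref{linearSeries} establish it for primitive-semigroup partitions and for box-shaped partitions by exhibiting, essentially by hand, a favorable chain of intermediate partitions interpolating from $P$ down to the empty partition along which the per-step difficulties telescope to the desired total. For a general $P$ one wants the same: a chain that dismantles $P$ one hook or column at a time, each step being an admissible bridge of small difficulty. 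The real subtlety is that the cost of a bridge depends on the interaction between adjacent parts of the partition, so a greedy peeling need not be optimal; finding the right order of dismantling, and proving the telescoped bound with sharp $A(r)$ and $B(r)$ (and, in the large-$|P|$ regime, exploiting the forced inefficiency of every peeling to saturate the codimension at $B(r)g$ rather than let it grow with $|P|$), is where the work lies and why the statement is a conjecture.

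One last point: the conjecture constrains \emph{every} component, while the bridge construction only exhibits \emph{one} of the right codimension --- it is, as the paper notes, blind to components with compact image in $\cm_g$. To obtain the full ``all components'' form of the second bullet, that $\cw_g(P)$ has pure codimension $|P|$ when $|P|$ is small, one would additionally degenerate a general point of an arbitrary component to a chain of elliptic curves (the compact-type curves underlying the bridge construction) and bound the dimension of the resulting space of limit twisted Weierstrass points --- again via the difficulty estimate --- and combine this with upper-semicontinuity and the lower bound above to force pure dimension. Making that degeneration argument unconditional, rather than conditional on the limit locus dominating the boundary of $\overline{\cm}_g$, is an extra difficulty layered on top of the combinatorial one.
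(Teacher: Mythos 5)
The statement you are proving is stated in the paper as a \emph{conjecture}: the paper offers no proof of it, and your proposal does not supply one either. What you have written is a strategy outline in which each load-bearing step is named and then left open, and you say so yourself (``why the statement is a conjecture''). To be concrete about where the genuine gaps are: (1) The only part that actually closes is $\textrm{codim}\, X \leq |P|$, which follows for arbitrary $P$ from the Schubert-cycle bound (formula \ref{weightBound}) exactly as you say --- but that is already in the paper and requires no induction. (2) The bound $\textrm{codim}\, X \leq B(r)g$ is an upper bound on the codimension of \emph{every} component, i.e.\ a lower bound on the dimension of an arbitrary component. The bridge-and-regeneration machine of Sections \ref{bridges}--\ref{difficulty} is constitutionally incapable of producing such a statement: it builds \emph{one} component of controlled dimension and is silent about all others. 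This half of the conjecture is the analogue of the rigid curves conjecture discussed in Section \ref{context}, and your proposal contains no mechanism for it beyond the phrase ``exploiting the forced inefficiency of every peeling,'' which is not an argument. (3) The same all-components issue afflicts the second bullet. Your proposed repair --- degenerate a general point of an arbitrary component to a chain of elliptic curves and bound the space of limit twisted Weierstrass points --- is the Eisenbud--Harris upper-bound strategy, but carrying it out when $\rho<0$ and $P$ is arbitrary is precisely the open problem; you acknowledge the conditional nature of the degeneration but do not remove it.

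Beyond these structural gaps, the combinatorial core you identify --- a bound on $\delta(P)$ for arbitrary $P$, linear in $g$ or better --- is also open: the paper proves such bounds only for partitions attached to primitive semigroups (Section \ref{primitive}) and for boxes (Lemma \ref{boxlike}), and even the constant-difficulty statement for boxes is itself only Conjecture \ref{boxConjecture}. Note also that the conjecture as stated does not define $r$ for a general partition $P$; any honest attempt would have to fix that (presumably $r+1$ is the number of parts, with the duality of Remark \ref{twpDuality} letting you take the smaller of the number of parts and the largest part). In short: your reading of the paper's architecture is accurate, and your diagnosis of the obstacles is correct, but no step of the conjecture beyond $\textrm{codim}\, X \leq |P|$ is actually proved.
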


\begin{qu}
In the range $A(r)g \leq |P| \leq B(r)g$, is there a purely combinatorial procedure to determine if $\cw_g(P)$ has any components of codimension $|P|$?
\end{qu}



\section{Twisted Weierstrass points}\label{twisted}

Theorem \ref{mainTheorem} can be deduced from a slightly stronger result about pointed curves. This section defines the relevant notion, that of \textit{twisted Weierstrass points}, and discusses some basic aspects of their moduli and the connection to theorem \ref{mainTheorem}.

Every point $p$ on a smooth curve $C$ determines a numerical semigroup called the \textit{Weierstrass semigroup} of the point; it consists of those integers $n$ such that $C$ has a rational function of degree $n$ whose only pole is at $p$. For all but finitely many points on a given curve $C$, this semigroup is $\{0, g+1, g+2, \cdots\}$; the other points are called \textit{Weierstrass points}. See \cite{dC} for history and applications of Weierstrass points. Consider the following generalization.

Let $C$ be a smooth curve, $\cl$ a degree $0$ line bundle on $C$, and $p \in C$ a point. The \emph{twisted Weierstrass sequence} of the triple $(C,\cl,p)$ is the following set of nonnegative integers.
\begin{equation*}
S(C,\cl,p) = \{ n \in \textbf{Z}_{\geq 0}:\ h^0(\cl(np)) > h^0(\cl((n-1)p)) \}
\end{equation*}

In other words, the twisted Weierstrass sequence is the set of possible pole orders at $p$ of rational sections of $\cl$ that are regular away from $p$. In the special case $\cl = \co_C$, the twisted Weierstrass sequence is the classical Weierstrass semigroup. By the Riemann-Roch formula, the complement of $S$ has precisely $g$ elements, where $g$ is the genus of $C$. If twisted Weierstrass sequences are given the obvious partial ordering, then they are upper semi-continuous families; therefore the \textit{general} twisted Weierstrass sequence is simply

\begin{equation*}
S = \{g, g+1, g+2, \cdots \}.
\end{equation*}

A triple $(C,\cl,p)$ with a different sequence is called a \textit{twisted Weierstrass point.}

We can and will describe a twisted Weierstrass sequence using the (equivalent) data of a partition. Namely, the \emph{twisted Weierstrass partition} $P(C, \cl, p)$ is given by the multiset $\{ (n+g) - s_n  \}$ (restricted to positive entries), where the twisted Weierstrass sequence is $s_0 < s_1 < s_2 < \cdots$. Alternatively, one can identify twisted Weierstrass sequences with Schubert cycles, which are identified with partitions in the usual way. This connection will be made more explicit after the definition below.

\begin{defn}
Given a nonnegative integer $g$ and a partition $P$, let $\tilde{\cw}_g(P)$ denote the moduli space of triples $(C, \cl, p)$, where $C$ is a smooth curve, $\cl$ is a line bundle of degree $0$ and $p \in C$, such that $P(C, \cl, p) = P$. Let $\cw_g(P)$ denote the closure of $\tilde{\cw}_g(P)$ in $\mathcal{P}ic_g^0 \times_{\cm_g} \cm_{g,1}$.
\end{defn}

The space $\tilde{\cw}_g(P)$ can also be described in terms of Schubert cycles, as follows. Any family of curves with marked point and line bundle corresponds to the following data:

\begin{itemize}
\item A family of curves $\pi:\ \cc \rightarrow B$,
\item A section $s:\ B \rightarrow \cc$, and
\item A line bundle $\cl$ on $\cc$.
\end{itemize}

These data determine a filtration of vector bundles on $B$, given by $E_k = \pi_* (\cl((k-1)\Sigma) / \cl(-\Sigma))$, where $\Sigma$ is the divisor given by the image of $s$. By Grauert's theorem (\cite{H} corollary 12.9), each $E_k$ is a vector bundle of rank $k$ on $B$. In addition to these, there is also a rank $g$ vector bundle on $B$ given by $F = \pi_* ( \cl((2g-1)p))$, with an obvious inclusion $F \hookrightarrow E_{2g}$. This inclusion induces a section $t: B \rightarrow G$ to the Grassmannian bundle $G$ of $g$-planes in $E_{2g}$. The filtration of $E_{2g}$ given by $E_0 \subset E_1 \subset \cdots \subset E_{2g}$ defines, for each partition $P$, an open Schubert cycle $\tilde{\Sigma}_P \subseteq G$, of codimension $|P|$ (see \cite{PAG} section $1.5$ for a definition and basic properties of Schubert cycles). Then the points of $B$ corresponding to points of $\tilde{\cw}_g(P)$ are precisely the inverse image $t^{-1}(\Sigma_P)$.

The description of $\tilde{\cw}_g(P)$ in terms of Schubert cycles gives the following bound on its local dimension at any point.

\begin{equation}\label{weightBound}
\dim_{(C, \cl, p)} \tilde{\cw}(P) \geq (4g-2) - |P|
\end{equation}

\begin{defn}
A point $(C, \cl, p) \in \tilde{\cw}_g(P)$ where equality holds in \ref{weightBound} is called a \emph{dimensionally proper point}.
\end{defn}

\begin{eg}
Let $P = (g)$. Then $(C, \cl, p) \in \tilde{\cw}_g(P)$ if and only if $h^0(\cl) = 1$ and $h^0(\cl(gp) = 1)$. This is is true if and only if $\cl = \co_C$ and $p$ is not a Weierstrass point. So $\tilde{\cw}_g(P)$ is isomorphic to the complement in $\cm_{g,1}$ of the locus of Weierstrass points, and $\cw_g(P) \cong \cm_{g,1}$. Therefore the local dimension at each point is $(3g-2) = (4g-2) - |P|$, so every point is dimensionally proper.
\end{eg}

\begin{eg}
Let $P =  (g\ 1)$. Then $\tilde{\cw}_g(P)$ consists of triples $(C, \cl, p)$ such that $h^0(\cl) = 1, h^0((g-1) p) = 1$, and $h^0(gp) = 2$. In other words, this is the locus in $\cm_{g,1}$ of simple Weierstrass points. This is \'{e}tale-locally isomorphic to $\cm_g$, so every point has local dimension $(3g-3) = (4g-2) - |P|$, so all points are dimensionally proper.
\end{eg}

\begin{figure}
\begin{center}
\begin{tabular}{ccc}

\begin{tikzpicture}[scale=0.5]
\draw (0,0) rectangle (7,1);
\draw (1,0) -- (1,1);
\draw (2,0) -- (2,1);
\draw (3,0) -- (3,1);
\draw (4,0) -- (4,1);
\draw (5,0) -- (5,1);
\draw (6,0) -- (6,1);
\draw [decorate,decoration={brace,amplitude=10pt},yshift=-5pt] (7,0) -- (0,0) node [midway, yshift=-15pt] {$g$};
\end{tikzpicture}
&
\begin{tikzpicture}[scale=0.5]
\draw (0,0) rectangle (7,1);
\draw (1,0) -- (1,1);
\draw (2,0) -- (2,1);
\draw (3,0) -- (3,1);
\draw (4,0) -- (4,1);
\draw (5,0) -- (5,1);
\draw (6,0) -- (6,1);
\draw (0,1) -- (0,2) -- (1,2) -- (1,1);
\draw [decorate,decoration={brace,amplitude=10pt},yshift=-5pt] (7,0) -- (0,0) node [midway, yshift=-15pt] {$g$};
\end{tikzpicture}
&
\begin{tikzpicture}[scale=0.5]
\draw (0,0) rectangle (7,4);
\draw (1,0) -- (1,4);
\draw (2,0) -- (2,4);
\draw (3,0) -- (3,4);
\draw (4,0) -- (4,4);
\draw (5,0) -- (5,4);
\draw (6,0) -- (6,4);
\draw (0,1) -- (7,1);
\draw (0,2) -- (7,2);
\draw (0,3) -- (7,3);
\draw [decorate,decoration={brace,amplitude=10pt},yshift=-5pt] (7,0) -- (0,0) node [midway, yshift = -15pt] {$(g-d+r)$};
\draw [decorate,decoration={brace,amplitude=10pt},xshift=-5pt] (0,0) -- (0,4) node [midway, xshift = -30pt] {$(r+1)$};
\end{tikzpicture}

 \\ $\cw_g(P) \cong \cm_{g,1}$ & $\cw_g(P) \cong \{\mbox{Weierstrass points\}}$ & $\cw_g(P) \cong \cw^r_{d,g} \times_{\cm_g} \cm_{g,1}$
\end{tabular}
\caption{Three examples of partitions and the geometric interpretation of $\cw_g(P)$.}
\label{figure:twpExamples}
\end{center}
\end{figure}
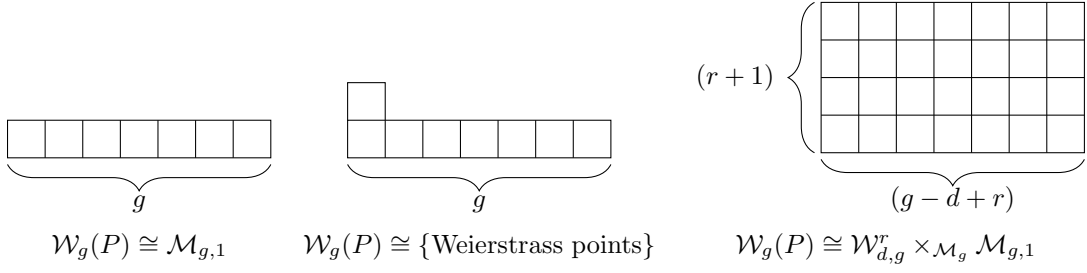

We will now study twisted Weierstrass points with the particular type of partition that will be relevant to theorem \ref{mainTheorem}. Let $P = (m^n)$ (i.e. the number $m$ occurs $n$ times). This partition corresponds to the following twisted Weierstrass sequence.

\begin{equation*}
S = \{g-m, g-m+1, \cdots, g-m+n-2, g-m+n-1, g+n, g+n+1, g+n+2, \cdots\}
\end{equation*}

Then $(C, \cl, p) \in \tilde{\cw}_g(P)$ if and only if the following conditions hold.

\begin{itemize}
\item $h^0(\cl((g-m-1)p)) < h^0(\cl(g-m)p) = 1$
\item $h^0(\cl(g-m+n-1)p) = h^0(\cl(g+n-1)p) = n$
\end{itemize}

These conditions are equivalent to saying that $\cl' = \cl((g+n-1)p)$ is a line bundle of degree $(g-m+n-1)$ and rank $n-1$, such that $p$ is not a ramification point for either the complete linear series $|\cl'|$ or its dual $|\omega_C \otimes \cl'^{\wedge}|$. See \cite{ACGH} appendix C for a definition of ramification points, and a proof that there are finitely many of them for a given linear series. Note that this is not true in positive characteristic.

Since any linear series has a finite number of ramification points, this means that for any line bundle $\cm$ on $C$ of degree $d = (g-m+n-1)$ and rank $r = (n-1)$, the triple $(C, \cm(-dp), p)$ is a point of $\tilde{\cw}_g(P)$ for all but finitely many points $p \in C$. The upshot of this is the following.

\begin{lemma}\label{twpToBN}
Let $g,d,r$ be integers, and let $\tilde{\cw}^r_{d,g} \subset \mathcal{P}ic^d_g$ consist of those pairs $(C, \cl)$ where $\cl$ is a line bundle on $C$ with degree $d$ and $h^0(\cl) = r+1$\footnote{The only difference from the definition of $\cw^r_{d,g}$ is that here exact equality is required.}. Let $P$ be the partition $((g-d+r)^{r+1})$. Then there is a map
\begin{eqnarray*}
f: \tilde{\cw}_g(P) &\rightarrow& \tilde{\cw}^r_{d,g}\\
(C, \cl, p) &\mapsto& (C, \cl(dp))
\end{eqnarray*}
which is surjective, and whose fiber over any point $(C, \cl) \in \tilde{\cw}^r_{d,g}$ is isomorphic to $C$ with finitely many punctures.
\end{lemma}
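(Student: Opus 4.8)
The plan is to construct the map $f$ and its inverse on fibers explicitly, using the characterization of $\tilde{\cw}_g(P)$ worked out in the paragraphs immediately preceding the lemma. First I would observe that the assignment $(C,\cl,p) \mapsto (C, \cl(dp))$ is visibly a morphism of moduli functors, so the only real content is to check that its image lies in $\tilde{\cw}^r_{d,g}$, that it is surjective onto $\tilde{\cw}^r_{d,g}$, and that the fibers have the stated form. For the first point: if $(C,\cl,p) \in \tilde{\cw}_g(P)$ with $P = ((g-d+r)^{r+1})$, then setting $m = g-d+r$ and $n = r+1$, the displayed conditions say exactly that $h^0(\cl((g-m+n-1)p)) = n$, i.e.\ $h^0(\cl(dp)) = r+1$ since $g - m + n - 1 = g - (g-d+r) + (r+1) - 1 = d$. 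Thus $\cl(dp)$ is a degree $d$ line bundle with exactly $r+1$ sections, so $(C,\cl(dp)) \in \tilde{\cw}^r_{d,g}$, and $f$ is well-defined.

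Next I would handle surjectivity and the fiber description together. Fix $(C, \cm) \in \tilde{\cw}^r_{d,g}$, so $\cm$ has degree $d$ and $h^0(\cm) = r+1$. For a point $p \in C$, consider the candidate preimage $(C, \cm(-dp), p)$. I need to show this lies in $\tilde{\cw}_g(P)$ for all but finitely many $p$, and that these are the only preimages. As the excerpt explains, membership in $\tilde{\cw}_g(P)$ is equivalent to $p$ being a non-ramification point for both the complete linear series $|\cm|$ (of degree $d$, rank $r$) and its Serre-dual series $|\omega_C \otimes \cm^\vee|$; by \cite{ACGH} appendix C each of these two linear series has only finitely many ramification points, so all but finitely many $p$ work, giving surjectivity and showing the fiber over $(C,\cm)$ contains a copy of $C$ minus a finite set. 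Conversely, any preimage $(C, \cl, p)$ of $(C, \cm)$ must satisfy $\cl(dp) \cong \cm$, hence $\cl \cong \cm(-dp)$ is determined by $p$; so the fiber is precisely $\{p \in C : (C, \cm(-dp), p) \in \tilde{\cw}_g(P)\}$, which is $C$ with finitely many punctures. A small point to be careful about is automorphisms: at the level of the fiber product $\mathcal{P}ic^d_g$ the pair $(C,\cm)$ may have a stabilizer, but since $\cl$ is pinned down by $p$ and the marked point $p$ varies in $C$ itself, the fiber is genuinely identified with an open subset of $C$, not a quotient of one.

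The step I expect to require the most care is verifying the precise equivalence between the two defining inequalities for $\tilde{\cw}_g(P)$ and the non-ramification conditions on $|\cm|$ and its dual — in particular tracking which $h^0$ jumps correspond to ramification of $|\cm|$ versus ramification of $|\omega_C \otimes \cm^\vee|$, and confirming that the condition $h^0(\cl((g-m-1)p)) < h^0(\cl((g-m)p)) = 1$ (equivalently $\cl((g-m)p)$ is effective of the minimal possible $h^0$, forcing $\cl(dp) = \cm$ to have a section not vanishing to high order at $p$, and $\cl$ itself to have $h^0 = 0$ or controlled sections) is automatically subsumed once $p$ avoids the ramification loci of both series. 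Concretely, one writes $\cl' = \cl(dp) = \cm$ and uses Riemann–Roch to convert statements about $h^0(\cl'((n-d)p))$ for $n$ near $g$ into statements about $h^0(\omega_C \otimes \cl'^\vee(\text{small multiple of }p))$; the vanishing-order sequences of $|\cl'|$ and $|\omega_C \otimes \cl'^\vee|$ at $p$ are then exactly the data encoded by the partition $P$ being the empty/rectangular shape it is. Once this dictionary is pinned down, surjectivity and the fiber computation both fall out immediately, and the rest of the lemma is formal.
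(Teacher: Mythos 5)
Your proposal is correct and follows essentially the same route as the paper, which likewise establishes the lemma via the observation (in the paragraphs just before the statement) that membership in $\tilde{\cw}_g(P)$ for the box partition is equivalent to $p$ avoiding the finitely many ramification points of $|\cm|$ and of $|\omega_C \otimes \cm^{\wedge}|$. The dictionary you flag as the delicate step is exactly what the paper asserts (and leaves at the same level of detail), so nothing essential is missing.
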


Notice that, in the notion of the lemma, $|P| = (r+1)(g-d+r) = g-\rho(g,d,r)$. It follows from this that the map $f$ in the lemma sends dimensionally proper points to dimensionally proper points. Thus to study dimensionally proper line bundles on curves is equivalent to studying dimensionally proper twisted Weierstrass points given by ``box-shaped'' partitions. This will be the object of the remainder of the paper.

\begin{remark}\label{twpDuality}
Notice that twisted Weierstrass points have a duality property: namely if $P^*$ is the dual partition of $P$ (that is, $P^*_n = \left| \{m:\ P_m > n \right|$), then $\tilde{\cw}_g(P) \cong \tilde{\cw}(P^*)$, via the map $(C, \cl, p) \mapsto (C, \omega_C(-(2g-2)p) \otimes \cl^\wedge, p)$. This generalizes the fact that $\tilde{\cw}^r_{d,g} \cong \tilde{\cw}^{g-d+r-1}_{(2g-2)-d, g}$ via $\cl \mapsto \omega_C \otimes \cl^\wedge$ (via the correspondence discussed above), since the dual partition of $((g-d+r)^{r+1})$ is $((r+1)^{g-d+r})$. This duality is reflected, for example, in the two perspectives by which one typically studies classical Weierstrass points: in terms of pole order of rational functions or in terms of ramification of the canonical series.
\end{remark}

\begin{qu}
Let $\mu(P,g)$ be the maximum codimension of a component of $\cw_g(P)$ (or $-\infty$ if there are none).  When is $\mu(P,g) < |P|$? Is there a purely combinatorial description of which partitions $P$ and integers $g$ give strict inequality?
\end{qu}

We will define in section \ref{difficulty} a function $\delta(P)$ of partitions such that $\mu(P,g) = |P|$ whenever $g \geq \frac12 (|P| + \delta(P))$. Bounding this function will give theorem \ref{mainTheorem}. First we describe the smoothing argument which underlies the definition of $\delta(P)$.

\section{Limits of twisted Weierstrass points}\label{limits}

To prove the existence of certain twisted Weierstrass points, it will be necessary to allow the curves to degenerate to singular curves, and to have a suitable notion of limits of the twisted Weierstrass points. Such a notion is provided by limit linear series, as introduced by Eisenbud and Harris \cite{EH86}. We begin by recalling the relevant definitions; see \cite{HM} for an expository treatment.

A \emph{linear series} of degree $d$ and rank $r$ on a smooth curve $C$, also called a $\mathfrak{g}^r_d$, is a pair $L = (\cl, V)$, where $\cl$ is a degree $d$ line bundle and $V \subseteq H^0(\cl)$ is an $(r+1)$-dimensional vector space of sections. The moduli space of genus $g$ curves with a chosen $\fg^r_d$ is denoted $\cg^r_{d,g}$. Given a linear series $L$ and a point $p \in C$, the \emph{vanishing sequence} of $L$ at $p$ is the set of integers $n$ such that $V$ contains a section vanishing to order exactly $n$ at $p$. This sequence consists of $(r+1)$ distinct integers; it is usually denoted $a^L(p) = (a^L_0(p), a_1^L(p), \cdots, a_r^L(p))$ where $a_0^L(p) < a_1^L(p) < \cdots < a_r^L(p)$. Equivalent to the vanishing sequence is the \emph{ramification sequence} $\alpha^L(p)$, given by $\alpha^L_i(p) = a^L_i(p) - i$. Most authors work with the ramification sequence rather than the vanishing sequence; we will work almost entirely with the vanishing sequence since it is slightly more notationally convenient for our purposes.

Let $\tilde{\cg}^r_{d,g} (a) \subseteq \cg^r_{d,g} \times_{\cm_g} \cm_{g,1}$ denote the space of triples $(C,L,p)$ such that the vanishing sequence of $L$ at $p$ is precisely $a$. Let $\cg^r_{d,g}(a)$ denote the space of such triples such that the vanishing sequence of $L$ at $p$ is at least\footnote{Whenever we say that a sequence $a$  is ``at least'' another sequence $a'$, we mean that $a_i \geq a_i'$ for each $i$.} $a$. More generally, $\tilde{\cg}^r_{d,g} (a^1, a^2, \cdots, a^s) \subseteq \cg^r_{d,g} \times_{\cm_g} \cm_{g,s}$ denotes the space of tuples $(C,L,p_1, \cdots, p_s)$ with vanishing sequence $a^i$ at $p_i$.

The theory of limit linear series works best for curves of compact type. A nodal curve $X$ is called \emph{compact type} if its dual graph (that is, the graph whose vertices are the components of $X$ and whose edges correspond to the nodes) has no cycles (equivalently, the Jacobian of $X$ is compact). Recently, Amini and Baker gave a definition of limit linear series for arbitrary nodal curves, but there does not yet exist a moduli space for these more general limit linear series. We will use the original definitions of Eisenbud and Harris.

\begin{defn}
Let $X$ be a curve of compact type. A \emph{refined limit linear series} $L$ of degree $d$ and rank $r$ (or \emph{limit} $\fg^r_d$) on $X$ consists of a $\fg^r_d$ $L^C$ on each connected component $C$ of $X$ (called the $C$-aspect of $L$), such that for each node $p \in X$ joining components $C_1$ and $C_2$, the following compatibility condition holds.

\begin{equation*}
a^{L^{C_1}}_i(p) + a^{L^{C_2}}_{r-i}(p) = d \mbox{ for } i=0,1,2,\cdots,r
\end{equation*}

The \emph{vanishing sequence} $a^L(p)$ of a limit series at a smooth point $p$ is the vanishing sequence of the $C$-aspect of $L$, where $p \in C$.
\end{defn}

Eisenbud and Harris also define \emph{coarse} limit series to be a collection of $C$-aspects such that the compatibility condition holds as an inequality. We will not need to consider coarse limits in this paper. Note that Osserman \cite{O} gave a different definition of limit linear series that is more suitable for the construction of a global moduli scheme. His definition is equivalent to the Eisenbud-Harris definition in the special case of refined limit series. Eisenbud and Harris do not construct a global moduli space of limit $\fg^r_d$s over all of $\cm_g$, but instead construct a local moduli space. More precisely, they construct a moduli space of (refined) limit linear series over a Kuranishi family of any curve of compact type. In either formalism, the existence of a suitable moduli space, plus a dimension bound on it coming from Schubert conditions, implies the following ``regeneration theorem.'' To state it first requires one more definition.

\begin{defn}
A marked curve $(X, p_1, \cdots, p_s)$ with a linear series $L$ of degree $d$ and rank $r$ is called \emph{dimensionally proper} if the local dimension of $\cg^r_{d,g}(a^L(p_1), \cdots, a^L(p_s))$ is exactly 

\begin{equation*}
\dim \cm_{g,s} + \rho - \sum_{i=1}^s \sum_{j=0}^r (a^L_j(p_i) - j).
\end{equation*}
\end{defn}

\begin{thm}[Corollary 3.7 of \cite{EH86}]  \label{regeneration}
Let $L$ be a limit $\fg^r_d$ on a curve $X$ of compact type, and $p_1, \cdots, p_s \in X$ are smooth points. Suppose that each component $C$ of $X$ is dimensionally proper with respect to all the points of $C$ that are nodes in $X$ and all the marked points $p_i$ that lie on $C$. Then there exists a smooth marked curve $(X', p_1', \cdots, p_s')$ with a dimensionally proper $\fg^r_d$ $L'$ such that $a^{L'}(p_j') = a^L(p_j)$ for all $j$. This marked curve and linear series lies in a one-parameter family whose limit is the marked curve $(X, p_1, \cdots, p_s)$ with limit linear series $L$.
\end{thm}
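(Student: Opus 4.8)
The plan is to realise $L$ as a point of an Eisenbud--Harris moduli space of limit linear series over a versal deformation of $(X,p_1,\ldots,p_s)$, and to deduce from the dimensional‑properness hypothesis that this point is a smooth point at which that moduli space is smooth over the deformation base; the required smoothing is then automatic. Concretely, fix a versal deformation $\pi\colon\cc\to B$ of the marked curve $(X,p_1,\ldots,p_s)$, with disjoint sections $\sigma_i$ extending the $p_i$ and missing the nodes, so that near $0=[X]$ the base $B$ is smooth of dimension $\dim\cm_{g,s}$. By \cite{EH86} (with \cite{O}'s reformulation for the scheme structure) there is a moduli scheme $\mathfrak p\colon W\to B$ of refined limit $\fg^r_d$'s carrying ramification sequence $a^L(p_i)$ along $\sigma_i$, with $[L]\in W$. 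Locally $W=\mathcal A\cap Z$, where $\mathcal A$ is a smooth scheme built from relative Picard and Grassmannian bundles over $B$ and $Z$ is a system of Schubert loci cut out by $e:=\dim\mathcal A-N$ equations, with $N:=\dim\cm_{g,s}+\rho-\sum_{i,j}(a^L_j(p_i)-j)$; in particular every component of $W$ has dimension $\ge N$, and over the locus $B^{\mathrm{sm}}\subseteq B$ of smooth fibres $W$ is \'etale-locally the space $\cg^r_{d,g}(a^L(p_1),\ldots,a^L(p_s))$.

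The heart of the argument is the claim that the fibre $W_0:=\mathfrak p^{-1}(0)$ --- the space of limit $\fg^r_d$'s on the \emph{fixed} curve $X$ with ramification $a^L(p_i)$ at the $p_i$ --- has local dimension exactly $N-\dim\cm_{g,s}=\rho-\sum_{i,j}(a^L_j(p_i)-j)$ at $[L]$. Since $L$ is refined and vanishing orders are upper semicontinuous, every refined limit series near $[L]$ has the same vanishing sequences at the nodes of $X$ as $L$, so near $[L]$ one has $W_0\cong\prod_C G^r_d(C)(\mathbf a_C)$, the product over the components $C$ of $X$, where $\mathbf a_C$ collects the vanishing sequences of $L$ at the nodes and marked points lying on $C$. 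The dimensional‑properness hypothesis, read on each fixed aspect, gives $\dim_{[L^C]}G^r_d(C)(\mathbf a_C)=\rho(g_C)-w(\mathbf a_C)$. The claim then follows by summation from two identities: $\sum_C\dim\cm_{g_C,n_C}=\dim\cm_{g,s}-(\#\text{nodes})$, and --- the crucial one --- the refined compatibility $a^{C_1}_i(q)+a^{C_2}_{r-i}(q)=d$ at a node $q$ joining $C_1,C_2$ forces the two node weights to add to $(r+1)(d-r)=\rho(g_1)+\rho(g_2)-\rho(g_1+g_2)$, so that iterating over the tree of nodes yields $\sum_C\bigl(\rho(g_C)-w(\mathbf a_C)\bigr)=\rho-\sum_{i,j}(a^L_j(p_i)-j)$.

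Granting the claim, the conclusion is formal. The subscheme $\mathcal A_0\subset\mathcal A$ lying over $0$ is smooth of codimension $\dim\cm_{g,s}$, and $W_0=\mathcal A_0\cap Z$ is cut out in $\mathcal A_0$ by the same $e$ equations that cut $W$ out of $\mathcal A$; since $\dim_{[L]}W_0$ attains the expected value $\dim\mathcal A_0-e$, those $e$ differentials are independent on $T_{[L]}\mathcal A_0$, hence on $T_{[L]}\mathcal A$, so $[L]$ is a smooth point of $W$ with $\dim_{[L]}W=N$ at which $\mathfrak p$ is smooth. Thus $W$ is, near $[L]$, a smooth $N$-fold flat over $B$ with nonempty fibres over $B^{\mathrm{sm}}$; choosing $[L']\in W$ over $B^{\mathrm{sm}}$ near $[L]$ produces a $\fg^r_d$ $L'$ on a smooth marked curve $(X',p'_1,\ldots,p'_s)$ with $a^{L'}(p'_j)=a^L(p_j)$, and the \'etale-local identification gives $\dim_{[L']}\cg^r_{d,g}(a^L(p_1),\ldots)=N$, so $L'$ is dimensionally proper. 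Finally a general smooth arc in $W$ through $[L]$ has generic point over $B^{\mathrm{sm}}$ and limit $[L]$; pulling back $\cc$ along it is the asserted one-parameter family degenerating to $(X,p_1,\ldots,p_s)$ with $L$.

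The step I expect to require the most care is the ``read on each fixed aspect'' in the second paragraph: the hypothesis is phrased through the moduli spaces $\cg^r_{d,g_C}$ and must be converted into the statement that each aspect $L^C$ achieves the Brill--Noether-expected dimension on the \emph{fixed} curve $C$ (with its nodes and marked points), which is what the product decomposition of $W_0$ sees; one must also confirm that refinedness genuinely pins the node vanishing sequences near $[L]$ and that $W$ is reduced there. Once those points are settled, the whole argument is driven by the additivity identity $\rho(g_1)+\rho(g_2)-(r+1)(d-r)=\rho(g_1+g_2)$ for the adjusted Brill--Noether number across a node, which is precisely what forces the Schubert equations to cut $W$ transversally at $[L]$.
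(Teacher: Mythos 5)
First, note that the paper does not prove this statement: it is imported verbatim as Corollary 3.7 of \cite{EH86}, so there is no internal proof to compare against. Your reconstruction follows the Eisenbud--Harris strategy faithfully --- the moduli space $W$ of limit linear series over a versal base $B$, the universal lower bound $\dim \geq N$ on every component coming from the Schubert description, the product decomposition of the central fibre at a \emph{refined} series (where refinedness plus upper semicontinuity of vanishing orders pins down the node sequences), and the additivity $\rho(g_1)+\rho(g_2)-(r+1)(d-r)=\rho(g_1+g_2)$ across a node. All of these are correct and are exactly the ingredients of the original proof.

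The one step that fails as written is the transversality deduction in your third paragraph: from ``$\dim_{[L]}W_0$ attains the expected value $\dim\mathcal A_0-e$'' you conclude that the $e$ defining equations have independent differentials at $[L]$, hence that $[L]$ is a smooth point of $W$ at which $\mathfrak p$ is smooth. Attaining the expected dimension does not imply transversality ($xy=0$ in the plane is a proper-dimensional intersection whose defining differential vanishes at the origin), and neither $W$ nor the fibres of $\mathfrak p$ need be smooth at $[L]$. Fortunately the stronger claim is not needed: the standard finish, and Eisenbud--Harris's, is pure dimension counting. Any irreducible component $V$ of $W$ through $[L]$ satisfies $\dim V\ge N$ and $\dim_{[L]}(V\cap W_0)\le N-\dim B$, so $\dim V=N$ and $V$ dominates $B$; since smooth curves are dense in $B$, a general point of $V$ gives the dimensionally proper $\fg^r_d$ on a smooth marked curve, and an arc in $V$ through $[L]$ with generic point over $B^{\mathrm{sm}}$ gives the one-parameter family. (If you insist on flatness, the expected dimension does make $W$ a local complete intersection, hence Cohen--Macaulay, at $[L]$, and the fibre dimension is locally constant there, so miracle flatness applies --- but smoothness is still unavailable.) The remaining caveat you flag yourself --- converting the hypothesis on the local dimension of $\cg^r_{d,g_C}(\mathbf a_C)$ over $\cm_{g_C,n_C}$ into the statement that the fibre over the \emph{fixed} curve $C$ has dimension exactly $\rho(g_C)-w(\mathbf a_C)$ --- is a genuine subtlety: the lower bound on the fibre is automatic but the upper bound is not, and this is precisely where one must invoke the exact form of ``dimensionally proper'' used in \cite{EH86}.
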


\begin{defn}\label{dplls}
A marked curve $(X,p_1, \cdots, p_s)$ of compact type with a refined limit $L$ satisfying the hypotheses of theorem \ref{regeneration} will also be called \emph{dimensionally proper}.
\end{defn}

The following lemma reinterprets the data of a twisted Weierstrass point in a manner that makes the theory of limit linear series applicable.

\begin{lemma}\label{twpgdr}
Let $r \geq g-1$ be an integer. Then $\tilde{\cw}_g(P) \cong \tilde{\cg}^r_{r+g,g}(a)$, where $a = (a_0, a_1, \cdots, a_r)$ is the sequence given by $a_i = i + P_{r-i}$, via the maps $(C, \cl, p) \mapsto (C, |\cl((r+g)p)|, p)$ and $(C, (\cl, H^0(\cl)), p) \mapsto (C, \cl(-(r+g)p),p)$.
\end{lemma}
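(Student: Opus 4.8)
The plan is to show that both maps are well-defined and mutually inverse, with the key point being the dictionary between the twisted Weierstrass sequence of $(C,\cl,p)$ and the vanishing sequence at $p$ of the complete linear series $|\cl((r+g)p)|$. First I would observe that for $r \geq g-1$, the line bundle $\cl((r+g)p)$ has degree $r+g$ and, since $\deg(\cl((r+g)p)) = r+g \geq 2g-1$, is nonspecial; hence $h^0(\cl((r+g)p)) = r+1$, so $|\cl((r+g)p)|$ is a (complete) $\fg^r_{r+g}$. Thus the first map lands in $\cg^r_{r+g,g}$. (The hypothesis $r\geq g-1$ is exactly what guarantees this degree bound and makes the complete series have the right rank.)

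Next I would compute the vanishing sequence. A section of $\cl((r+g)p)$ vanishing to order exactly $k$ at $p$ is the same thing as a rational section of $\cl$, regular away from $p$, with a pole of order exactly $(r+g)-k$ at $p$; equivalently $k = (r+g) - n$ for some $n$ in the twisted Weierstrass sequence $S(C,\cl,p)$ with $n \leq r+g$. Since the complement of $S$ in $\mathbf{Z}_{\geq 0}$ has exactly $g$ elements and $r+g \geq$ all the "gaps" (the gaps are $< g \leq r+1 \leq r+g$... more precisely the largest gap is at most $2g-2 < r+g$ once $r\geq g-1$, and in fact the relevant bookkeeping shows all $r+1$ of the smallest elements of $S$ are $\leq r+g$), the vanishing sequence is $\{(r+g) - s : s \in S,\ s \leq r+g\}$ listed in increasing order. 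Writing $S$ in terms of the partition $P$ via $s_n = (n+g) - P_n$ (extended by $P_n = 0$ for large $n$), and reindexing by $i = r - n$, one gets precisely $a_i = (r+g) - s_{r-i} = (r+g) - \big((r-i+g) - P_{r-i}\big) = i + P_{r-i}$, which is the claimed sequence. Conversely, given $(C, (\cl, H^0(\cl)), p) \in \tilde{\cg}^r_{r+g,g}(a)$, the series is automatically complete because $h^0(\cl) = r+1 = \dim H^0(\cl)$ forces $V = H^0(\cl)$ (using nonspeciality again), so $\cl(-(r+g)p)$ is a well-defined degree $0$ line bundle, and the same computation run backwards shows $P(C, \cl(-(r+g)p), p) = P$. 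The two maps are visibly inverse to each other: $\cl \mapsto \cl((r+g)p) \mapsto \cl((r+g)p)(-(r+g)p) = \cl$, and these identifications are compatible in families (both sides are cut out inside $\mathcal{P}ic$ bundles by the same Schubert-type conditions), so the isomorphism holds as moduli spaces, not just on points.

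The main obstacle I anticipate is purely bookkeeping: carefully justifying that \emph{all} $r+1$ elements of the vanishing sequence arise from elements of $S$ that are $\leq r+g$ — i.e., that no element of the vanishing sequence is "lost" because the corresponding semigroup element exceeds $r+g$ — and conversely that reindexing by $i = r-n$ correctly matches the decreasing partition convention with the increasing vanishing sequence convention. This amounts to checking that $s_r \leq r + g$, which follows since $s_r = (r+g) - P_r \leq r+g$ (as $P_r \geq 0$), together with the fact that $s_0 < s_1 < \cdots < s_r$ are the $r+1$ smallest elements of $S$. Once this index juggling is set up correctly the rest is formal. I would also remark that the isomorphism is one of schemes over $\cm_{g,1}$ (indeed over $\mathcal{P}ic_g^0 \times_{\cm_g}\cm_{g,1}$, after the twist), so that it carries dimensionally proper points to dimensionally proper points in the sense of the preceding definitions — this is what makes the lemma usable for the inductive argument in later sections.
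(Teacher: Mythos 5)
Your proof is correct and follows the same route as the paper's (which simply notes that $r+g\geq 2g-1$ makes $|\cl((r+g)p)|$ a complete $\fg^r_{r+g}$ and that the vanishing sequence is $a$ by unravelling definitions); you have just made the index bookkeeping explicit. One tiny quibble: the largest gap of $S(C,\cl,p)$ can be $2g-1$, not $2g-2$ (e.g.\ when $\cl\cong\omega_C(-(2g-2)p)$), but this does not affect your argument, since the check you actually use is $s_r=(r+g)-P_r\leq r+g$ together with $h^0(\cl((r+g)p))=r+1$, and in any case $2g-1\leq r+g$ when $r\geq g-1$.
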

\begin{proof}
Since $r+g \geq 2g-1$, $|\cl((r+g)p)|$ is indeed a $\fg_{r+g}^r$; unraveling definitions shows that the vanishing sequence at $p$ is $a$. So this is a well-defined map to  $\tilde{\cg}^r_{r+g,g}(a)$. In reverse, every $\fg^r_{r+g}$ is necessarily complete, hence of the form $|\cl|$ for some $\cl$; then  $(C, \cl(-(r+g)p),p)$ indeed lies in $\tilde{\cw}_g(P)$ by the same calculation.
\end{proof}

Therefore, we have the following notion of a \emph{limit twisted Weierstrass point}: a curve $X$ of compact type, with marked smooth point $p$ and refined limit $\fg^r_{r+g}$ $L$ (where $r \geq g-1$) and the vanishing sequence $a$ as described above. Constructing such object, and proving that they are dimensionally proper (in the sense of definition \ref{dplls}) will suffice to construct dimensionally proper twisted Weierstrass points (on smooth curves).

\section{Elliptic bridges and displacement}\label{bridges}

The object of this section is to demonstrate how dimensionally proper twisted Weierstrass points on genus $g$ curves give rise to dimensionally proper twisted Weierstrass points on curves of genus $g+1$, with slightly modified partitions. The construction proceeds by adjoining an elliptic curve to the genus $g$ curve, and smoothing the resulting nodal curve. The basic technical tool is the regeneration theorem for limit linear series, as introduced by Eisenbud and Harris \cite{EH86} (see \cite{HM} for a readable expository account and \cite{O} for a more recent perspective that is more applicable in characteristic $p$).

The following lemma is a slight restatement of proposition 5.2 from \cite{EH87}. It is the basic tool in our inductive constructions.

\begin{lemma} \label{bridgeLemma}
Fix integers $r,d$ and two sequences $b = (b_0, b_1, \cdots, b_r)$ and $c = (c_0, c_1, \cdots, c_r)$ such that
		
\begin{equation*}
b_{i} +c_{r-i} = d-1
\end{equation*}
		
for each index $i$. Then for any genus $1$ curve $E$ with distinct points $p,q$ and degree $d$ line bundle $\cl$, there exists a unique linear series $L = (\cl, V)$ on $E$ such that for all $i$ the following inequalities hold.
		
\begin{eqnarray*}
a^L_i(p) &\geq& b_i\\
a^L_i(q) &\geq& c_i\\
\end{eqnarray*}
\end{lemma}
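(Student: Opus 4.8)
The plan is to prove this by a dimension count combined with an existence argument coming from Brill-Noether theory on the elliptic curve $E$, as in Eisenbud-Harris. First I would observe that the constraint $b_i + c_{r-i} = d-1$ means that a linear series $L$ of degree $d$ satisfying $a^L_i(p) \geq b_i$ and $a^L_i(q) \geq c_i$ for all $i$ is forced to have its full vanishing behavior at $p$ and $q$ pinned down: indeed the expected codimension conditions at $p$ and $q$ add up to $\sum_i (b_i - i) + \sum_i (c_i - i) = \sum_i (b_i + c_{r-i} - 2i) = (r+1)(d-1) - r(r+1) = (r+1)(d-1-r)$. Since the Brill-Noether number of a $\fg^r_d$ on a genus $1$ curve is $\rho(1,d,r) = 1 - (r+1)(1-d+r) = 1 + (r+1)(d-1-r)$, the expected dimension of the locus of such $L$ inside $\cg^r_{d,1}$ is exactly $\dim \cm_{1,2} + \rho - (r+1)(d-1-r) = 2 + 1 - 1 = \dots$ — more precisely, after fixing $(E,p,q)$ and the line bundle $\cl$, the expected dimension of the space of $L = (\cl,V)$ with these vanishing conditions is $0$. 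So the content is that (a) such an $L$ exists, and (b) it is unique.

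For existence and uniqueness I would argue directly. The condition $a^L_i(p) \geq b_i$ for all $i$ says that $V$ contains, for each $i$, a section vanishing to order $\geq b_i$ at $p$; equivalently $\dim(V \cap H^0(\cl(-b_i p))) \geq r+1-i$. Similarly at $q$. Now I would use the key structural fact about line bundles on an elliptic curve: $h^0(\cl(-b_i p - c_{r-i} q)) = h^0(\cl(-(d-1)(\text{effective divisor of degree }d-1)))$, which has degree $d - b_i - c_{r-i} = 1$, hence $h^0 = 1$ unless the line bundle is trivial (degree $0$ cannot occur here since $b_i + c_{r-i} = d-1 < d = \deg\cl$). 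So for each $i$ there is a one-dimensional space $W_i := H^0(\cl(-b_i p - c_{r-i} q)) \subseteq H^0(\cl)$, spanned by a section $\sigma_i$ vanishing to order exactly $b_i$ at $p$ and exactly $c_{r-i} = c_{r-i}$ at $q$. I would then take $V := \langle \sigma_0, \sigma_1, \dots, \sigma_r \rangle$ and check that the $\sigma_i$ are linearly independent (their vanishing orders at $p$ are $b_0 < b_1 < \cdots < b_r$, all distinct, so they are independent), giving an $(r+1)$-dimensional $V$; by construction $a^L_i(p) \geq b_i$ and, reading off vanishing orders at $q$ in the reverse order, $a^L_i(q) \geq c_i$. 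This establishes existence.

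For uniqueness, suppose $L' = (\cl, V')$ is any linear series satisfying the inequalities. The condition at $p$ forces $V'$ to contain a section with vanishing order $\geq b_r$ at $p$; combined with the condition at $q$ — which forces the subspace of $V'$ of sections vanishing to order $\geq c_0$ at $q$ to have dimension $\geq r+1$, i.e. to be all of $V'$ — an induction on $i$ shows that $V'$ must contain a section vanishing to order $\geq b_i$ at $p$ and $\geq c_{r-i}$ at $q$ simultaneously, hence lying in $W_i$, hence a scalar multiple of $\sigma_i$. Thus $V' \supseteq \langle \sigma_0,\dots,\sigma_r\rangle = V$, and by dimension $V' = V$. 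The main obstacle I anticipate is making the simultaneous-vanishing induction fully rigorous: one has to check that the section of $V'$ achieving $a^{L'}_i(q) \geq c_i$ can be chosen inside the subspace already constrained at $p$, which is where the exact additivity $b_i + c_{r-i} = d-1$ and the rigidity of degree-$1$ line bundles on the elliptic curve ($h^0 \le 1$) do the essential work — a base-point-free pencil trick style argument comparing the flags at $p$ and $q$. This is exactly the computation underlying Proposition 5.2 of \cite{EH87}, so I would ultimately cite that proposition, reproving the elliptic case here only to the extent needed.
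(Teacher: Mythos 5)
Your overall strategy is the same as the paper's---build $V$ out of subspaces of the form $H^0(\cl(-ip-jq))$ using Riemann--Roch on $E$, then prove uniqueness by a codimension count---but your existence construction has a genuine gap. You take $W_i = H^0(\cl(-b_i p - c_{r-i}q))$, note it is one-dimensional, and assert that its generator $\sigma_i$ vanishes to order \emph{exactly} $b_i$ at $p$ and exactly $c_{r-i}$ at $q$. That is false: $\cl(-b_ip - c_{r-i}q)$ has degree $1$, so its unique section vanishes at the single point $x$ with $\cl(-b_ip-c_{r-i}q)\cong\co_E(x)$, and nothing prevents $x=p$ or $x=q$. If $x=p$ then $\sigma_i$ vanishes to order $b_i+1$ at $p$, and when moreover $b_{i+1}=b_i+1$ one checks that $W_i = W_{i+1}$ (both equal $H^0$ of the trivial bundle $\cl(-b_{i+1}p-c_{r-i}q)\cong\co_E$ twisted back into $\cl$), so $\sigma_i$ and $\sigma_{i+1}$ are proportional and your span $\langle\sigma_0,\dots,\sigma_r\rangle$ has dimension strictly less than $r+1$. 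A concrete failure: $r=1$, $d=2$, $b=c=(0,1)$, $\cl=\co_E(p+q)$; here $W_0=H^0(\co_E(p))$ and $W_1=H^0(\co_E(q))$ both pull back to the \emph{same} section of $\cl$ (the one with divisor $p+q$), so your $V$ is a line, not a pencil. The correct series in this case is all of $H^0(\cl)$. Your uniqueness argument inherits the same defect: it only shows $V'\supseteq\langle\sigma_0,\dots,\sigma_r\rangle$, which does not determine $V'$ when that span is too small.

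The paper's proof is designed precisely around this degeneration: instead of one section per index, it groups the indices into maximal runs $I_k=\{u_k,\dots,v_k\}$ on which the $b_i$ are consecutive integers, and takes the \emph{block} $V_k = W_{b_{u_k},\,c_{r-v_k}}$, which has dimension $|I_k|$ by Riemann--Roch regardless of whether $\cl$ is special. The only possible order-of-vanishing bump then occurs at the top of each block ($b_{v_k}$ may become $b_{v_k}+1$), and since $b_{v_k}+1 < b_{u_{k+1}}$ by maximality of the runs, distinct blocks still have disjoint vanishing orders at $p$, so $\dim V = \sum_k|I_k| = r+1$. Your argument works verbatim only in the case where all consecutive differences $b_{i+1}-b_i$ are at least $2$ (every block is a singleton); to repair it in general you would need to replace your rank-one $W_i$ by the paper's block spaces, at which point you have reproduced the paper's proof. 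Citing Proposition 5.2 of \cite{EH87} would of course suffice, but the self-contained argument as you have written it does not.
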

	
\begin{proof}
For all pairs of indices $(i,j)$ with $i+j<d$, define the following vector space of sections of $\cl$.
\begin{equation*}
W_{i,j} = \textrm{im} \left( H^0(\cl(-i p - j q )) \hookrightarrow H^0(\cl) \right)
\end{equation*}
		
That is, $W_{i,j}$ consists of those sections vanishing to order at least $i$ at $p$ and at least order $j$ at $q$. $W_{i,j}$ has dimension $d - i - j$, by Riemann-Roch. 

Separate the indices $\{0, 1, 2, \cdots, r\}$ into the longest intervals intervals $I_k = \{u_k, u_k+1, \cdots, v_k\}$, such that $b_{u_k}, b_{u_k+1}, \cdots, b_{v_k}$ are consecutive integers. Let $m$ be the number of these intervals, so that $\{0, 1, \cdots, r\}$ is a disjoint union of $I_1, I_2, \cdots, I_m$. Then $u_1 = 0, v_m = r$, and $v_k + 1 = u_{k+1}$. Define, for each $k \in \{1, 2, \cdots, m\}$, $V_k := W_{b_{u_k}, c_{r-v_k}}$. Observe that the dimension of $V_k$ is $ d- b_{u_k} - b_{r - v_k} = d - b_{u_k} - (d-1) + b_{v_k} = 1 + b_{v_k} - b_{u_k} = 1 + v_k - u_k = |I_k|$.

Let $V$ be the sum of all the spaces $V_k$. We claim that $V$ satisfies the conditions of the lemma, and that it is the unique such vector space of sections. First, we verify that $V$ satisfies the conditions of the lemma. By the Riemann-Roch formula, each vector space $V_k$ has the following orders of vanishing at $p$: $\{b_{u_k}, b_{u_k+1}, \cdots, b_{v_k-1}, b_{v_k}'\}$, where 

\begin{equation*}
b_{v_k}' = \left\{ \begin{array}{ll} b_{v_k}+1 & \textrm{if }\ \cl \cong \co_E((b_{v_k}+1)p + (c_{r-v_k})q) \\ v_k & \textrm{otherwise.} \end{array} \right.
\end{equation*}

In all cases, $b_{v_k}' < b_{u_{k+1}}$, so the sections of any two different spaces $V_k$ have disjoint sets of orders of vanishing at $p$. It follows that the orders of vanishing at $p$ of sections in $V$ is the disjoint union 

\begin{equation*}
\bigcup_{k=1}^m \{b_{u_k}, b_{u_k+1}, \cdots, b_{v_k-1}, b_{v_k}'\}.
\end{equation*}

In particular, the dimension of $V$ is $\sum_{k=1}^m |I_k| = r+1$, and its vanishing sequence at $p$ is at least $b_{u_1},b_{u_1+1}, \cdots, b_{v_1}, b_{u_2}, \cdots, b_{v_2}, \cdots, b_{v_m}$, which is identical to $b_0, b_1, \cdots, b_r$. Symmetric reasoning shows that the orders of vanishing of $V$ at $q$ are at least $c_0, c_1, \cdots, c_r$.  So $V$ satisfies the conditions of the lemma.

Now suppose that $V'$ satisfies the conditions of the lemma. Then the sections of $V'$ vanishing to order at least $a_{u_k}$ have codimension at most $u_k$, and those vanishing to order at least $b_{r-v_k}$ at $q$ have codimension at most $r-v_k$, hence $V' \cap V_k$ has codimension at most $r + (u_k - v_k)$ and thus dimension at least $1 + v_k - u_k = |I_k|$. Therefore this intersection must be all of $V_k$. Thus $V' \supseteq V$, and $\dim V' = \dim V$, so in fact $V' = V$. So $V$ is the unique such vector space of sections.\end{proof}

In fact, examining the end of the proof of lemma \ref{bridgeLemma}, we have actually proved the following.

\begin{lemma} \label{bridge2}
Let $L = (\cl, V)$ be a linear series as described in lemma \ref{bridgeLemma}. Then the actual orders of vanishing of $L$ are as follows:

\begin{eqnarray*}
a^L_i(p) &=& \left\{ \begin{array}{ll} b_i+1 & \textrm{ if } (b_i+1) \in \Lambda \textrm{ and } b_{i+1} > b_i+1 \\ b_i & \textrm{ otherwise} \end{array} \right.\\
a^L_i(q) &=& \left\{ \begin{array}{ll} c_i+1 & \textrm{ if } (c_i+1) \in (d - \Lambda) \textrm{ and } c_{i+1} > c_i+1 \\ c_i & \textrm{ otherwise} \end{array} \right.\\
\end{eqnarray*}

where $\Lambda$ is the arithmetic progression $\{n:\ \cl \cong \co_E(np + (d-n)q) \}$. \hfill $\Box$
\end{lemma}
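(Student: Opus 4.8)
The plan is to observe that the proof of Lemma~\ref{bridgeLemma} already computes the exact vanishing orders of $V$ and only needs to be repackaged. Recall the ingredients of that proof: the index set $\{0,1,\dots,r\}$ is partitioned into the maximal runs $I_k=\{u_k,\dots,v_k\}$ on which $b$ takes consecutive integer values, one sets $V_k=W_{b_{u_k},\,c_{r-v_k}}$, which has dimension $|I_k|$, and $V=\sum_k V_k$. For each $k$ I would first write down the exact set of vanishing orders of $V_k$ at $p$. Since $V_k$ is the space of sections of $\cl$ vanishing to order at least $b_{u_k}$ at $p$ and at least $c_{r-v_k}$ at $q$, the sections of $V_k$ vanishing to order at least $i$ at $p$ form $H^0(\cl(-ip-c_{r-v_k}q))$ for $i\ge b_{u_k}$; by Riemann--Roch on the elliptic curve $E$ this has dimension $d-i-c_{r-v_k}$ as long as that number is positive, dimension $1$ or $0$ when it equals zero --- namely $1$ exactly when $\cl\cong\co_E(ip+c_{r-v_k}q)$ --- and dimension $0$ thereafter. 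The identity $b_{v_k}+c_{r-v_k}=d-1$ shows this degree reaches $0$ precisely at $i=b_{v_k}+1$, so the vanishing orders of $V_k$ at $p$ are $\{b_{u_k},b_{u_k}+1,\dots,b_{v_k}-1,b'_{v_k}\}$, where $b'_{v_k}=b_{v_k}+1$ if $\cl\cong\co_E((b_{v_k}+1)p+c_{r-v_k}q)$ and $b'_{v_k}=b_{v_k}$ otherwise. This is exactly the computation already recorded midway through the proof of Lemma~\ref{bridgeLemma}.

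Next I would use the disjointness observation from that same proof: because $b$ is strictly increasing and each run is maximal, $b_{u_{k+1}}\ge b_{v_k}+2$, so the vanishing-order sets of the $V_k$ are pairwise disjoint and appear in increasing order as $k$ increases. Hence the vanishing sequence of $V=\sum_k V_k$ at $p$ is simply their concatenation, and its $i$-th entry equals $b_i$ unless $i=v_k$ is the final index of its run, in which case it equals $b'_{v_k}$. Being the last index of a run is the condition $b_{i+1}>b_i+1$ (automatic for $i=r$), and the condition $b'_{v_k}=b_{v_k}+1$ becomes, upon substituting $c_{r-v_k}=d-1-b_{v_k}$, the statement $\cl\cong\co_E((b_i+1)p+(d-b_i-1)q)$, i.e.\ $b_i+1\in\Lambda$. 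Combining these gives the stated formula for $a^L_i(p)$.

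Finally, the formula for $a^L_i(q)$ follows from the symmetric argument, grouping the indices into the maximal runs of $c$ and arguing at $q$ instead of $p$; here the bump at the end of a run of $c$ occurs when $\cl\cong\co_E((d-c_i-1)p+(c_i+1)q)$, which is precisely the condition $c_i+1\in d-\Lambda$. I do not anticipate a genuine obstacle in any of this; the only points that require care are the boundary case of Riemann--Roch when the twisting line bundle has degree zero (this is exactly where the progression $\Lambda$ enters), the translation of the isomorphism $\cl\cong\co_E((b_{v_k}+1)p+c_{r-v_k}q)$ into membership in $\Lambda$ via $b_{v_k}+c_{r-v_k}=d-1$, and the edge index $i=r$, for which $b_{i+1}$ is undefined but which is always the last index of its run, so the bumped case can still occur and the stated condition should be read with its second clause vacuous.
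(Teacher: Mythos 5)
Your proposal is correct and follows exactly the route the paper intends: the paper proves Lemma \ref{bridge2} simply by observing that the exact vanishing orders $\{b_{u_k},\dots,b_{v_k-1},b'_{v_k}\}$ of each $V_k$, their disjointness, and the resulting concatenation were already established in the proof of Lemma \ref{bridgeLemma}, and your translation of the condition $\cl\cong\co_E((b_{v_k}+1)p+c_{r-v_k}q)$ into $b_i+1\in\Lambda$ (and its mirror at $q$) is the only repackaging needed. Your careful handling of the degree-zero Riemann--Roch boundary case and the $i=r$ edge index matches the paper's (more terse) treatment.
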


For notational convenience, we make the following definition. In the following definition and the remainder of this paper, an \textit{arithmetic progression} will be a proper subset $\Lambda$ of the integers such that the set of differences of elements of $\Lambda$ is closed under addition. In particular, $\Lambda$ may be empty or have only a single element, but it may not be all of $\textbf{Z}$.

\begin{defn}
Let $a = (a_0, a_1, a_2, \cdots, a_r)$ be a strictly increasing sequence of integers, and let $\Lambda$ be an arithmetic progression (as defined above). Define the \emph{upward displacement} $a^+_\Lambda$ and \emph{downward displacement} $a^-_\Lambda$ of $a$ with respect to $\Lambda$ as follows.
\begin{eqnarray*}
(a^+_\Lambda)_i &=& \begin{cases} a_i+1 & \mbox{if } a_i+1 \in \Lambda  \mbox{ and } a_{i+1} > a_i+1\\ a_i & \mbox{otherwise}\end{cases}\\
(a^-_\Lambda)_i &=& \begin{cases} a_i-1 & \mbox{if } a_i \in \Lambda \mbox{ and } a_{i-1} < a_i-1  \\ a_i & \mbox{otherwise} \end{cases}
\end{eqnarray*}
In these expressions $i$ is an index in $\{0, 1, \cdots, r\}$ and for notational convenience $a_{-1} = -\infty$ and $a_{r+1} = \infty$ (when these appear on the right side). This definition is interpreted visually, using partition notation, in figure \ref{figure:corners}.
\end{defn}

Informally, the upward displacement ``attracts'' the sequence upward to the progression $\Lambda$, while the downward displacement ``repels'' the sequence downward away from $\Lambda$. Another interpretation is that displacement forgets, for each pair $\{\lambda-1, \lambda\}$ (where $\lambda \in \Lambda$) which of these two numbers is in the sequence, remembering only how many ($0$, $1$, or $2$) are present.

The following lemma reformulates the previous two lemmas in the language of limit linear series.

\begin{lemma}\label{bridge3}
Let $C$ be a smooth curve, $p_1 \in C$ a point, $E$ a genus $1$ curve, and $p_2, q$ two distinct points on $E$. Let $X$ be the the nodal curve obtained by attaching $C$ and $E$ at $p_1$ and $p_2$. Let $L^C = (\cl^C, V^C)$ be a $\fg_d^r$ on $C$, and $\cl^E$ a degree $(d+1)$ line bundle on $E$. Then there exists a unique limit $\fg^r_{d+1}$ $L$ on $X$ with the following properties.
\begin{enumerate}
\item The $C$-aspect of $L$ is $L^C+p_1$ (that is, $L^C$ with a base point added at $p_1$).
\item The $E$-aspect of $L$ has line bundle $\cl^E$.
\item For all $i \in \{0, 1, \cdots, r\}$, $a^L_i(q) \geq a^{L^C}_i(p_i)$.
\end{enumerate}
Let $\Lambda = \left\{n: \cl^E \cong \co_E(nq + (d+1-n)p_2) \right\}$; then the vanishing sequence of $L$ is precisely $a^L(q) = \left( a^{L_C}(p_1) \right)^+_\Lambda$, and $L$ is a refined limit linear series if and only if $(a^{L^C}(p_1))^-_\Lambda = a^{L^C}(p_1)$.
\end{lemma}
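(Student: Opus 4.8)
The plan is to deduce Lemma~\ref{bridge3} directly from Lemmas~\ref{bridgeLemma} and \ref{bridge2} by applying them on the elliptic component $E$ with a suitable choice of the sequences $b$ and $c$. Specifically, on $E$ with the degree $(d+1)$ bundle $\cl^E$ and the marked points $q$ and $p_2$, I would invoke Lemma~\ref{bridgeLemma} with $b_i := a^{L^C}_i(p_1)$ (the desired vanishing at $q$) and with $c_i$ defined by the compatibility requirement $b_i + c_{r-i} = (d+1) - 1 = d$; the hypothesis $g-d+r\geq\ldots$ is not needed here, only that $b$ is strictly increasing, which it is. This produces a unique $\fg^r_{d+1}$ on $E$, which I will declare to be the $E$-aspect $L^E$, and I must then check three things: that it is compatible with the prescribed $C$-aspect $L^C+p_1$ at the node, that it gives a well-defined limit $\fg^r_{d+1}$, and that it is the \emph{unique} such limit series.

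First I would verify the node compatibility. The $C$-aspect is $L^C$ with a base point added at $p_1$, so its vanishing sequence at $p_1$ is $(a^{L^C}_i(p_1)+1)_i = (b_i+1)_i$. The limit linear series compatibility condition at the node requires $a^{L^C+p_1}_i(p_1) + a^{L^E}_{r-i}(p_2) = d+1$, i.e. $a^{L^E}_{r-i}(p_2) = d - b_i = c_{r-i}$. So refinedness at the node is exactly the statement that $a^{L^E}_j(p_2) = c_j$ for all $j$, i.e. that the inequalities at $p_2$ in Lemma~\ref{bridgeLemma} are equalities. By Lemma~\ref{bridge2}, the actual vanishing at $p_2$ is $c_j + 1$ precisely when $c_j + 1 \in ((d+1) - \Lambda')$ and $c_{j+1} > c_j+1$, where $\Lambda' = \{n : \cl^E \cong \co_E(np_2 + (d+1-n)q)\}$; since $\Lambda$ as defined in the lemma statement has $q$ and $p_2$ in the opposite roles, $\Lambda' = (d+1) - \Lambda$, so $(d+1) - \Lambda' = \Lambda$. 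Translating through $c_j = d - b_{r-j}$, a short calculation shows that ``$c_j+1 \in \Lambda$ and $c_{j+1} > c_j + 1$'' is equivalent to ``$b_{r-j} \in \Lambda$ and $b_{r-j-1} < b_{r-j} - 1$,'' which is exactly the condition that $(b^-_\Lambda)_{r-j} \neq b_{r-j}$. Hence the $E$-aspect is unramified beyond $c$ at $p_2$ — making $L$ a refined limit series — if and only if $b^-_\Lambda = b$, i.e. $(a^{L^C}(p_1))^-_\Lambda = a^{L^C}(p_1)$, which is the claimed criterion.

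Next, the vanishing sequence at $q$: again by Lemma~\ref{bridge2}, $a^{L^E}_i(q) = b_i+1$ exactly when $b_i+1 \in \Lambda$ and $b_{i+1} > b_i+1$, and $b_i$ otherwise — which is verbatim the definition of $(b^+_\Lambda)_i = (a^{L^C}(p_1))^+_\Lambda$. So property (3) holds as an equality, and the stated formula $a^L(q) = (a^{L^C}(p_1))^+_\Lambda$ follows. Finally, uniqueness: any limit $\fg^r_{d+1}$ on $X$ with $C$-aspect $L^C + p_1$ must, by node compatibility, have $E$-aspect with vanishing at least $c$ at $p_2$, and property (3) forces vanishing at least $b$ at $q$; these are precisely the hypotheses pinning down the $E$-aspect uniquely via the uniqueness clause of Lemma~\ref{bridgeLemma}. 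Since both aspects are then determined, $L$ is unique.

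I expect the only real friction to be bookkeeping: carefully matching the roles of the two marked points between the statement here (where $\Lambda$ is written in terms of $q$) and Lemmas~\ref{bridgeLemma}--\ref{bridge2} (where the ``distinguished'' point playing the role of $q$ there is our $q$, but the arithmetic-progression bundle condition is stated the other way), and correctly propagating the index reversal $c_j = d - b_{r-j}$ through the ``corner'' conditions defining displacement. None of this is deep, but it is the step most prone to sign or index errors, so I would write it out explicitly rather than assert it. Everything else is a direct citation of the two preceding lemmas plus the observation that a base point shifts the vanishing sequence uniformly by $1$.
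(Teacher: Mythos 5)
Your proposal is correct and follows essentially the same route as the paper: apply Lemma \ref{bridgeLemma} on $E$ with the two sequences forced by node compatibility and condition (3), obtaining existence and uniqueness of the $E$-aspect, then read off the actual vanishing orders from Lemma \ref{bridge2} to identify $a^L(q)$ with the upward displacement and to characterize refinedness as $(a^{L^C}(p_1))^-_\Lambda = a^{L^C}(p_1)$. One bookkeeping caveat of exactly the kind you flagged: in the analysis at $p_2$ the membership condition coming from Lemma \ref{bridge2} should be $c_j+1 \in (d+1)-\Lambda$, which translates to $b_{r-j}\in\Lambda$, rather than $c_j+1\in\Lambda$ (which would translate to $b_{r-j}\in (d+1)-\Lambda$); your two reflections cancel, so the criterion you end up with is nevertheless the correct one.
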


\begin{proof}
For $i \in \{0, 1, \cdots, r\}$, let $b_i =  d - a_{r-i}^{L^C}(p_1)$ and let $c_i = a_i^{L^C}(p_1)$. Then of course $b_i + c_{r-i} = (d+1)-1$ for each $i$, so a suitable $E$-aspect for $L$ exists and is unique by lemma \ref{bridgeLemma}. Lemma \ref{bridge2} shows that vanishing sequence of this $E$-aspect (and therefore of $L$) is $c^+_\Lambda$ as claimed. The vanishing sequence of the $E$-aspect at $p_2$ is $b^+_{(d+1-\Lambda)}$, and $L$ is refined if and only if this is equal to $b$. But observe that since $b_i = d - c_{r-i}$, this is equivalent to $c^-_\Lambda = c$, as claimed.
\end{proof}

By allowing the curve $X$ and limit linear series $L$ to vary, this construction on two-component curves gives the following result on dimensionally proper linear series with specified ramification.

\begin{prop}\label{displace}
Suppose that $a = (a_0, a_1, \cdots, a_r)$ is a strictly increasing sequence of nonnegative integers, and $\Lambda$ is an arithmetic progression (as defined above) such that $a^-_\Lambda = a$ and $a^+_\Lambda$ differs from $a$ in at most two places. If $\tilde{\cg}^r_{d,g}(a)$ has a dimensionally proper point, belonging to a component mapping to $\cm_{g,1}$ with general fiber dimension $d$, then $\tilde{\cg}^r_{d+1,g+1}(a^+_\Lambda)$ has a dimensionally proper point, belonging to a connected component mapping to $\cm_{g,1}$ with general fiber dimension at most $d+1$ (if $a = a^+_\Lambda$) or at most $d$ (if $a \neq a^+_\Lambda$).
\end{prop}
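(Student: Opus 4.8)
The plan is to degenerate an optimal curve realizing a dimensionally proper point of $\tilde{\cg}^r_{d,g}(a)$ by gluing on an elliptic bridge, apply Lemma \ref{bridge3} to produce a limit linear series, verify that the resulting limit linear series is dimensionally proper in the sense of Definition \ref{dplls}, and then invoke the regeneration theorem (Theorem \ref{regeneration}) to smooth it to a genuine dimensionally proper point of $\tilde{\cg}^r_{d+1,g+1}(a^+_\Lambda)$, keeping track of the fiber dimension over $\cm_{g,1}$ throughout.

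First I would set up the degeneration. Let $(C_0, L^{C_0}, p_1)$ be a dimensionally proper point of $\tilde{\cg}^r_{d,g}(a)$ lying on a component $Z$ mapping to $\cm_{g,1}$ with general fiber dimension $d$. Choose a general elliptic curve $E$ with two general points $p_2, q$ and a degree $(d+1)$ line bundle $\cl^E$ whose associated arithmetic progression $\left\{n: \cl^E\cong\co_E(nq+(d+1-n)p_2)\right\}$ is exactly the prescribed $\Lambda$ (for instance, choosing $\cl^E \cong \co_E(\lambda q + (d+1-\lambda)p_2)$ for a single $\lambda \in \Lambda$ if $\Lambda$ is a singleton, and the general choice if $\Lambda$ is empty; the common difference controls the rest). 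Form the nodal curve $X = C_0 \cup_{p_1=p_2} E$ of compact type and genus $g+1$, with marked point $q \in E$. Lemma \ref{bridge3} gives a unique refined limit $\fg^r_{d+1}$ $L$ on $X$ — refined precisely because the hypothesis $a^-_\Lambda = a$ holds — whose vanishing sequence at $q$ is $a^L(q) = a^+_\Lambda$. The $C_0$-aspect is $L^{C_0} + p_1$, which has the same vanishing sequence $a$ at $p_1$ as $L^{C_0}$ itself (adding a base point at $p_1$ shifts every vanishing order up by $1$, but then $d$ goes up by $1$ too — one must double-check the bookkeeping here, namely that $L^{C_0}$ being dimensionally proper with respect to $p_1$ on $C_0$ implies $L^{C_0}+p_1$ is dimensionally proper with respect to $p_1$ on $C_0$, which follows since $\cg^r_{d,g}(a)$ and $\cg^r_{d+1,g}(a+(1,\ldots,1))$ are isomorphic under adding a base point at the marked point, and the adjusted vanishing sequence and degree change $\rho$ by zero).

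The crux is verifying that $(X, q)$ with the limit series $L$ is dimensionally proper in the sense of Definition \ref{dplls}, i.e.\ that each component is dimensionally proper with respect to all the nodes and marked points it carries. For $C_0$ this is the hypothesis (together with the base-point bookkeeping above). For $E$, I need the $E$-aspect of $L$ to be dimensionally proper with respect to $p_2$ and $q$; since $E$ has genus $1$, $\cg^r_{d+1,1}$ has dimension $\dim\cm_{1,1} + \rho(1,d+1,r) = 1 + 1 - (r+1)(r-d) $, wait — more carefully, $\rho(1,d+1,r) = 1-(r+1)(1-(d+1)+r) = 1-(r+1)(r-d)$, and since Lemma \ref{bridgeLemma} produces a \emph{unique} series with the required vanishing at two general points, I expect the relevant $\cg^r_{d+1,1}(b,c)$ to be $0$-dimensional over a fixed $(E,p_2,q)$, hence $3$-dimensional over $\cm_{1,2}$, and I must check this matches $\dim\cm_{1,2} + \rho - \sum(\text{ramification at }p_2) - \sum(\text{ramification at }q)$ — this is exactly the computation in Proposition 5.2 of \cite{EH87}, and I would cite or reproduce it. With both components dimensionally proper, Theorem \ref{regeneration} applies and yields a smooth marked curve $(X', q')$ with a dimensionally proper $\fg^r_{d+1}$ $L'$ having $a^{L'}(q') = a^+_\Lambda$, arising in a one-parameter family specializing to $(X,q,L)$. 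Finally, to control the fiber dimension over $\cm_{g,1}$: the smoothing family has base dimension $\dim Z + \dim(\text{deformations of }E\text{ with its data}) + 1$ for the smoothing parameter, and the image in $\cm_{g+1,1}$ picks up the extra $\dim\cm_{1,1}=1$ of moduli of $E$ plus the node-smoothing parameter; tallying these against the general fiber dimension of the component of $\tilde{\cg}^r_{d+1,g+1}(a^+_\Lambda)$ we land in shows the fiber dimension over $\cm_{g+1,1}$ is at most $d+1$ when $a = a^+_\Lambda$ (the elliptic curve moves in a $1$-parameter family carrying $\cl^E$ freely, contributing to the fiber) and at most $d$ when $a \neq a^+_\Lambda$ (the condition that two vanishing orders jump pins down $\cl^E$ up to finitely many choices on each $E$, killing one parameter of the fiber).

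I expect the main obstacle to be the fiber-dimension bookkeeping in the last step: distinguishing the two cases $a = a^+_\Lambda$ versus $a \neq a^+_\Lambda$ requires carefully identifying which of the parameters (the modulus of $E$, the line bundle $\cl^E$ within its $1$-dimensional family on a fixed $E$, the two gluing points, the smoothing parameter) remain free in the fiber of the map to $\cm_{g+1,1}$ after imposing the vanishing condition $a^+_\Lambda$ at $q'$, as opposed to contributing to the base. When $a^+_\Lambda$ strictly exceeds $a$ in one or two places, those extra jumps are Schubert conditions that cut down precisely the freedom to choose $\cl^E$ in its progression, which is what converts the naive fiber bound $d+1$ down to $d$; making this rigorous is essentially a dimension count on the total space of the regeneration family relative to its image, and is where the hypothesis "$a^+_\Lambda$ differs from $a$ in at most two places" is used to guarantee $L$ is still refined and the count is clean.
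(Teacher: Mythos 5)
Your proposal follows the paper's proof essentially verbatim: attach an elliptic bridge at the marked point, apply Lemma \ref{bridge3} to obtain the unique refined limit series (refined because $a^-_\Lambda = a$) with vanishing $a^+_\Lambda$ at $q$, verify dimensional properness of the elliptic aspect via the set-theoretic injectivity of the map to $\textrm{Pic}^{d+1}_{1,2}$ coming from Lemma \ref{bridgeLemma}, and then regenerate. The one place your sketch misstates the count --- the space $\cg^r_{d+1,1}\left(a^{L^E}(p_2),a^{L^E}(q)\right)$ has local dimension $3-\delta$ rather than $3$, because its image in $\textrm{Pic}^{d+1}_{1,2}$ is the locus where the progression of $\cl'$ contains $\Lambda$ (pinning down $\cl'$ when $\delta=1$ and additionally forcing $p_2'-q'$ to be torsion when $\delta=2$) --- is exactly the correction you supply yourself in your final paragraph, and it is also what makes the properness check and the fiber-dimension dichotomy come out right, so the argument is sound.
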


\begin{proof}
Assume without loss of generality that $\Lambda$ is as small as possible. This means that if $a^+_\Lambda$ differs from $a$ in two places, then $\Lambda$ is the progression generated by those two values $(a^+_\Lambda)_i$ that are greater than $a_i$; if  $a^+_\Lambda$ differs from $a$ in one place, then $\Lambda$ is a single element; and if  $a^+_\Lambda = a$, then $\Lambda$ is empty.

Let $(C,L^C,p_1)$ be a dimensionally proper point of $\tilde{\cg}^r_{d,g}(a)$. Let $(E, \cl,p_2, q)$ be a twice-pointed elliptic curve, chosen in the following way.
\begin{itemize}
\item If $\Lambda$ is infinite, say $\{n:\ n \equiv m \mod d\}$ for some $m$ and $d$, then let $\cl = \co_E(mq + (d+1-m)p_2)$ and select $p_2, q$ so that $(p_2 - q)$ is a $d$-torsion point on $\textrm{Pic}^0(E)$.
\item If $\Lambda$ has a single element $m$, then let $\cl = \co_E(mq + (d+1-m)p_2)$ and choose $p_2, q$ so that $(p_2-q)$ is not torsion.
\item If $\Lambda$ is empty, then choose $\cl$  distinct from all line bundles $\co_E(mq + (d+1-m)p_2)$ and choose $p_2, q$ arbitrarily.
\end{itemize}
Let $X$ be the nodal curve described in lemma \ref{bridge3}, $L$ the limit $\fg^r_{d+1}$ on $X$ described in that lemma, and $L^E$ its $E$-aspect. Since $a^-_\Lambda = a$, this series is refined. We shall show that $(X,L,q)$ is dimensionally proper in the sense described in section {limits}. By assumption, the $C$-aspect $L^C+p_1$, with the marked point $p_1$, is dimensionally proper. So it suffices to prove that $(E, L^E, p_2, q)$ is dimensionally proper. Let $\delta$ be the number of places where $a^+_\Lambda$ differs from $a$. An elementary calculation shows that this is equivalent to showing that the local dimension of $\cg^r_{d+1,1}\left(a^{L^E}(p_2), a^{L^E}(q)\right)$ at $(E, L^E, p_2, q)$ is $3 - \delta$. Now, the map $f:\ \cg^r_{d+1,1}\left(a^{L^E}(p_2), a^{L^E}(q)\right) \rightarrow \textrm{Pic}^{d+1}_{1, 2}$ (that is, to the moduli space of twice-marked genus $1$ smooth curves with a chosen degree $(d+1)$ line bundle) is set-theoretically injective by lemma \ref{bridgeLemma}. By lemma \ref{bridge2}, the image of $f$ consists of all $(E', \cl', p_2', q')$ such that the arithmetic progression $\Lambda' = \{n:\ \cl' \cong \co_{E'}(nq' + (d+1-n)p_2'\}$ contains $\Lambda$. By a little casework, the dimension of the image is $3- \delta$. It follows that $(E, L^E, p_2,q)$ is dimensionally proper, and therefore so is $(X,L,q)$. By theorem \ref{regeneration}, $\tilde{\cg}^r_{d+1,g+1}(a^+_\Lambda)$ has a dimensionally proper point. The bound on the dimension of fibers over $\cm_{g,1}$ follows by considering the semicontinuity of fiber dimension for the map from the space of limit linear series (on a $1$-parameter family degenerating to $(X,p)$) over $\bar{\cm}_{g,1}$.
\end{proof}

\section{The displacement difficulty of a partition}\label{difficulty}

As before, we will use the following convention: an \textit{arithmetic progression} will mean a proper subset $\Lambda \subset \textbf{Z}$ such that $\Lambda - \Lambda$ is closed under addition. In particular, $\Lambda$ may be empty or have a single element, but it cannot be all of $\textbf{Z}$. Also, we adopt the following notational conventions: the partition elements are $P_0 \geq P_1 \geq \cdots \geq P_n$, and $P_k$ is defined to be $0$ for $k>n$ and $\infty$ for $k < 0$.

\begin{defn}
Let $P$ be a partition and $\Lambda$ an arithmetic progression. Then define the \emph{upward displacement} $P^+_\Lambda$ and \emph{downward displacement} $P^-_\Lambda$ of $P$ with respect to $\Lambda$ as follows.
\begin{eqnarray*}
(P^+_\Lambda)_i &=& \left\{  \begin{array}{ll} P_i + 1 & \textrm{if } (P_i-i) \in \Lambda \textrm{ and } P_{i-1} > P_i \\ P_i & \textrm{otherwise} \end{array} \right.\\
(P^-_\Lambda)_i &=& \left\{  \begin{array}{ll} P_i - 1 & \textrm{if } (P_i-i-1) \in \Lambda \textrm{ and } P_{i+1} < P_i \\ P_i & \textrm{otherwise} \end{array} \right.\\
\end{eqnarray*}
\end{defn}

This definition is much easier to understand visually; it is illustrated in figure \ref{figure:corners}. Here the partition $P$ is represented by its Young diagram, and the arithmetic progression $\Lambda$ is represented by an evenly spaced sequence of diagonal lines. Then the two displacements are obtained by finding all places where the line of $\Lambda$ meet the corners of $P$, and either ``turning the corners out'' (in the case of $P^+_\Lambda$ or ``turning the corners in'' in the case of $P^-_\Lambda$).

\begin{figure}
\begin{center}
\begin{tikzpicture}[scale=0.5]
\draw (0,5) -- (0,0) -- (8,0);
\draw (0,5) -- (1,5) -- (1,0);
\draw (0,4) -- (1,4);
\draw (0,3) -- (1,3);
\draw (0,2) -- (7,2) -- (7,0);
\draw (2,2) -- (2,0);
\draw (3,2) -- (3,0);
\draw (4,2) -- (4,0);
\draw (5,2) -- (5,0);
\draw (6,2) -- (6,0);
\draw (0,1) -- (8,1) -- (8,0);
\draw[style=dashed] (-1,3) -- (2,6);
\draw[style=dashed] (-1,0) -- (5,6);
\draw[style=dashed] (1,-1) -- (8,6);
\draw[style=dashed] (4,-1) -- (9,4);
\draw[style=dashed] (7,-1) -- (9,1);
\draw[style=ultra thick] (0,5) -- (1,5) -- (1,4);
\draw[style=ultra thick] (1,3) -- (1,2) -- (2,2);
\draw[style=ultra thick] (6,2) -- (7,2) -- (7,1);
\draw[style=ultra thick] (8,1) -- (8,0) -- (9,0);
\draw (4,-1) node[below] {$P = (8,7,1,1,1)$};
\draw[->] (0,-3) -- (-2,-5);
\draw[->] (7,-3) -- (9,-5);

\begin{scope}[xshift=-7cm, yshift=-12cm]
\draw (0,4) -- (0,0) -- (8,0);
\draw (0,4) -- (1,4) -- (1,0);
\draw (0,3) -- (1,3);
\draw (0,2) -- (6,2) -- (6,0);
\draw (2,2) -- (2,0);
\draw (3,2) -- (3,0);
\draw (4,2) -- (4,0);
\draw (5,2) -- (5,0);
\draw (0,1) -- (8,1) -- (8,0);
\draw (7,1) -- (7,0);
\draw[style=dashed] (-1,3) -- (2,6);
\draw[style=dashed] (-1,0) -- (5,6);
\draw[style=dashed] (1,-1) -- (8,6);
\draw[style=dashed] (4,-1) -- (9,4);
\draw[style=dashed] (7,-1) -- (9,1);
\draw[style=ultra thick] (0,5) -- (0,4) -- (1,4);
\draw[style=ultra thick] (1,3) -- (1,2) -- (2,2);
\draw[style=ultra thick] (6,2) -- (6,1) -- (7,1);
\draw[style=ultra thick] (8,1) -- (8,0) -- (9,0);
\draw (4,-1) node[below] {$P^-_\Lambda = (8,6,1,1)$};
\end{scope}

\begin{scope}[xshift=7cm, yshift=-12cm]
\draw (0,5) -- (0,0) -- (9,0);
\draw (0,5) -- (1,5) -- (1,0);
\draw (0,4) -- (1,4);
\draw (0,3) -- (2,3) -- (2,0);
\draw (0,2) -- (7,2) -- (7,0);
\draw (3,2) -- (3,0);
\draw (4,2) -- (4,0);
\draw (5,2) -- (5,0);
\draw (6,2) -- (6,0);
\draw (0,1) -- (9,1) -- (9,0);
\draw (8,1) -- (8,0);
\draw[style=dashed] (-1,3) -- (2,6);
\draw[style=dashed] (-1,0) -- (5,6);
\draw[style=dashed] (1,-1) -- (8,6);
\draw[style=dashed] (4,-1) -- (10,5);
\draw[style=dashed] (7,-1) -- (10,2);
\draw[style=ultra thick] (0,5) -- (1,5) -- (1,4);
\draw[style=ultra thick] (1,3) -- (2,3) -- (2,2);
\draw[style=ultra thick] (6,2) -- (7,2) -- (7,1);
\draw[style=ultra thick] (8,1) -- (9,1) -- (9,0);
\draw (4,-1) node[below] {$P^+_\Lambda = (9,7,2,1,1)$};
\end{scope}

\end{tikzpicture}
\end{center}
\caption{An example illustrating the definition of displacement. Here $\Lambda = \{2\mod 3\}$.}
\label{figure:corners}
\end{figure}
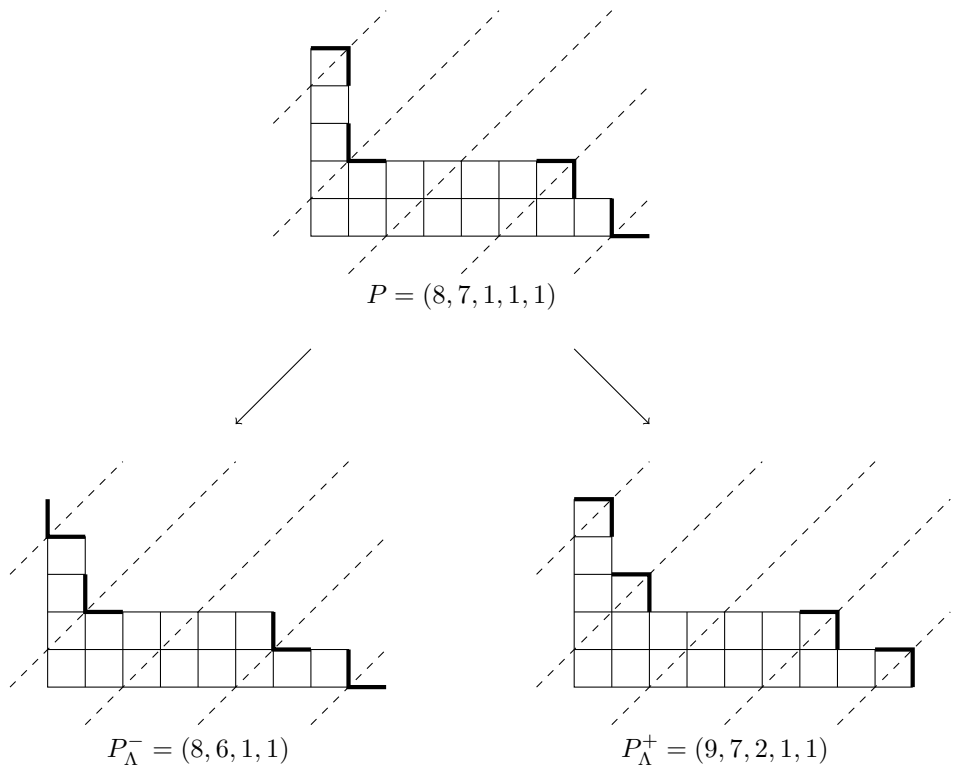

Observe that if $P'$ is any other partition such that $P^-_\Lambda \leq P' \leq P^+_\Lambda$, then the upward and downward displacements of $P'$ are the same as those of $P$ (with respect to $\Lambda$). So displacement can be regarded as a sort of projection to the nearest partition that is stable with respect to the given arithmetic progression.

Call two partitions $P_1, P_2$ \textit{linked} if there is an arithmetic progression $\Lambda$ (proper but possibly empty or singleton) such that $P_2$ is the upward displacement of $P_1$ and $P_1$ is the downward displacement of $P_2$. Note that this implies that $P_1$ is its own downward displacement and $P_2$ is its own upward displacement. Say that $P_1$ and $P_2$ are $k$-\textit{linked} if they are linked and $|P_2| - |P_1| = k$.

It is easy to verify that if $P_1, P_2$ are any two partitions with $P_1 \leq P_2$, then $P_1$ can be connected to $P_2$ by a sequence of $1$-linked partitions. Indeed, the arithmetic progressions can be taken to be singletons.

As we saw in the previous section, we are particularly interested in 2-linked partitions. More specifically, we are interested in partitions that can be joined by a path of 1-linked and 2-linked pairs, using as few 1-linked pairs as possible. Therefore make the following definition.

\begin{defn}
Call a sequence of partitions of increasing sum \emph{valid} if any two adjacent partitions in the sequence are $1$-linked or $2$-linked. Define the \emph{difficulty} $\delta(P)$ of a partition $P$ to be the fewest number of 1-linked adjacent pairs in a valid sequence from the empty partition to $P$.
\end{defn}

With this definition, we can now state the following lemma, which relates difficulties of partitions to dimensionally proper twisted Weierstrass points.

\begin{lemma}\label{displaceInduction}
Let $P$ be any partition and $\Lambda$ an arithmetic progression (proper and possibly empty or singleton). If $|P^+_\Lambda| - |P^-_\Lambda| \leq 2$ and $\tilde{\cw}_g(P^-_{\Lambda})$ has a dimensionally proper point lying in a fiber over $\cm_{g,1}$ of dimension $d$, then $\tilde{\cw}_g(P^+_\Lambda)$ has a dimensionally proper point lying in a fiber of dimension at most $(d+1)$, and at most $d$ if $P^+_\Lambda \neq P^-_\Lambda$.
\end{lemma}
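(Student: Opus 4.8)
The plan is to keep the genus fixed at $g$ by performing the elliptic-bridge displacement \emph{inside} a boundary degeneration of the genus-$g$ curve, rather than by appending a new elliptic component to it. First I would translate both sides through the dictionary of Lemma \ref{twpgdr}: fixing $r \geq g-1$ with $r+1$ at least the number of parts of $P^+_\Lambda$, I identify $\tilde{\cw}_g(P^\pm_\Lambda)$ with $\tilde{\cg}^r_{r+g,g}(b^\pm_{\Lambda'})$, where $b_i = i + (P^-_\Lambda)_{r-i}$ and $\Lambda' = \Lambda + (r+1)$. A direct substitution into the two displacement rules shows that partition displacement becomes sequence displacement (since $a_k+1 \in \Lambda'$ iff $a_k - r \in \Lambda$), that the self-stability $(P^-_\Lambda)^-_\Lambda = P^-_\Lambda$ becomes the refined-limit condition $b^-_{\Lambda'} = b$, and that $|P^+_\Lambda| - |P^-_\Lambda| \leq 2$ is exactly the statement that $b^+_{\Lambda'}$ differs from $b$ in at most two indices.

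Next I would set up the genus-preserving nodal model. Let $X = C_0 \cup_x E$ be a curve of compact type with $C_0$ of genus $g-1$, $E$ elliptic, meeting at a single node $x$, so that $X$ has arithmetic genus $g$; place the marked point $p$ on $E$. I build a limit $\fg^r_{r+g}$ on $X$ whose $E$-aspect is the elliptic bridge of Lemma \ref{bridge3} carrying the node-vanishing $b$ at $x$ across to the marked-point vanishing $b^+_{\Lambda'}$ at $p$, with the twice-pointed elliptic curve $(E,\cl^E,x,p)$ chosen (torsion order and line bundle exactly as in the proof of Proposition \ref{displace}) so that its progression is $\Lambda'$. By Lemma \ref{bridge2} the vanishing at $p$ is then $b^+_{\Lambda'}$, and because $b^-_{\Lambda'} = b$ the limit is refined. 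Granting that the $C_0$-aspect is dimensionally proper with respect to $x$ and that the elliptic aspect contributes its expected $3-\delta$ to the local dimension ($\delta$ being the number of displaced indices, as tabulated at the end of Proposition \ref{displace}), the pair $(X,p)$ is dimensionally proper in the sense of Definition \ref{dplls}, and Theorem \ref{regeneration} smooths it to a dimensionally proper genus-$g$ point of $\tilde{\cg}^r_{r+g,g}(b^+_{\Lambda'})$, i.e.\ of $\tilde{\cw}_g(P^+_\Lambda)$. The fiber-dimension bounds transport through Lemma \ref{twpgdr} via the semicontinuity argument applied to the one-parameter smoothing family, yielding at most $d+1$ when $P^+_\Lambda = P^-_\Lambda$ and at most $d$ otherwise.

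The step I expect to be the main obstacle is producing the dimensionally proper genus-$(g-1)$ \emph{backbone}, since the hypothesis supplies a dimensionally proper point only in genus $g$. My plan is to degenerate the given smooth genus-$g$ point of $\tilde{\cw}_g(P^-_\Lambda)$ into the boundary stratum parametrizing curves of the form $C_0 \cup_x E$, and to argue that its flat limit is a refined limit series whose $E$-aspect is the \emph{trivial} bridge (empty progression, no displacement) and whose $C_0$-aspect is dimensionally proper with node-vanishing $b$ at $x$. Establishing this requires that $\tilde{\cw}_g(P^-_\Lambda)$ meet this boundary divisor in the expected codimension and that dimensional properness be inherited by the backbone; I would deduce both from the lower bound \ref{weightBound} together with the dimension bound on the space of limit linear series over the Kuranishi family, so that no dimension can be lost in passing to the limit—precisely the input behind Theorem \ref{regeneration}. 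Once the d.p.\ backbone with the required vanishing at $x$ is in hand, replacing the trivial bridge by the $\Lambda'$-bridge of the second paragraph and regenerating completes the argument; the residual checks (that adding the bridge does not disturb the backbone's properness, and the $3-\delta$ elliptic computation via Lemma \ref{twpToBN}'s identification of the two notions of properness) are the routine casework already performed for Proposition \ref{displace}.
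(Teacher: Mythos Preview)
Your first paragraph is exactly the paper's proof: translate via Lemma~\ref{twpgdr} (with $r=g$) to identify $\tilde{\cw}_g(P^-_\Lambda)$ with $\tilde{\cg}^r_{r+g,g}(a)$ and $\Lambda$ with $\Lambda' = \Lambda + (r+1)$, check that partition displacement becomes sequence displacement, and then invoke Proposition~\ref{displace}. At that point the paper is done. Proposition~\ref{displace} outputs a dimensionally proper point of $\tilde{\cg}^r_{r+g+1,g+1}(a^+_{\Lambda'})$, which by Lemma~\ref{twpgdr} (now with genus $g+1$, noting $r=g \geq (g+1)-1$) is $\tilde{\cw}_{g+1}(P^+_\Lambda)$. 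The conclusion is in genus $g+1$, not $g$; the subscript $g$ in the displayed statement is a typo, as is confirmed both by the paper's own proof (``proposition \ref{displace} implies \ldots'') and by the way Corollary~\ref{diffToPoints} is used: each step of a valid sequence consumes one unit of genus, so that after $n$ steps one lands in $\tilde{\cw}_n$.

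Everything in your second and third paragraphs is an attempt to repair a statement that does not need repairing, and the repair has a genuine gap at precisely the place you flagged. To run your genus-preserving argument you must produce, from a dimensionally proper smooth point of $\tilde{\cw}_g(P^-_\Lambda)$, a dimensionally proper \emph{genus-$(g-1)$} backbone with prescribed vanishing at the node. Theorem~\ref{regeneration} goes the wrong direction for this: it smooths d.p.\ limit series to d.p.\ smooth series, not the reverse. A flat limit of a one-parameter family into the boundary need not be refined, need not have a dimensionally proper $C_0$-aspect, and indeed the given component of $\tilde{\cw}_g(P^-_\Lambda)$ need not meet that boundary divisor at all. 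The lower bound~\eqref{weightBound} only bounds dimensions from below and gives no control over how a specific component meets the boundary. So the ``degeneration'' step cannot be completed as sketched. Fortunately it is unnecessary: once you accept that the target genus is $g+1$, your first paragraph plus a direct citation of Proposition~\ref{displace} is the whole argument.
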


\begin{proof}
Without loss of generality, let $P = P^-_\Lambda$. Let $(C,\cl,p) \in \tilde{\cw}_g(P)$ be a dimensionally proper point. By lemma \ref{twpgdr}, this can also be regarded as a dimensionally proper point of $\tilde{\cg}^r_{r+g, g}(a)$, where $r = g$, for $a_i = i + P_{r-i}$. Let $\Lambda' = \Lambda + (r+1)$. Then it follows that, again by lemma \ref{twpgdr}, $\tilde{\cw}_g(P^+_\Lambda) \cong \tilde{\cg}^r_{r+g,g} ( a^+_{\Lambda'})$. Since $a$ differs from $a^+_\Lambda$ in at most $2$ places, proposition \ref{displace} implies that $\tilde{\cw}_g(P^+_\Lambda)$ has a dimensionally proper point, lying in a fiber over $\cm_{g,1}$ of dimension at most $d+1$ (at most $d$ if $P^+_\Lambda \neq P^-_\Lambda$).
\end{proof}

\begin{cor}\label{diffToPoints}
Let $P$ be any partition. Then for all $g \geq \frac12 ( |P| + \delta(P))$, $\tilde{\cw}_g(P)$ has a dimensionally proper point, lying in a fiber over $\cm_{g,1}$ of dimension at most $\max(0, g - |P|)$.
\end{cor}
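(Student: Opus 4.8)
The plan is to prove Corollary \ref{diffToPoints} by induction on $|P|$, using Lemma \ref{displaceInduction} to climb along a valid sequence that realizes the difficulty $\delta(P)$. Concretely: choose a valid sequence of partitions $\emptyset = Q^0, Q^1, \dots, Q^N = P$ of strictly increasing sum in which exactly $\delta(P)$ of the adjacent pairs are $1$-linked (the rest being $2$-linked); this exists by the definition of $\delta$. I will show by induction on $j$ that $\tilde{\cw}_{g_j}(Q^j)$ has a dimensionally proper point lying in a fiber over $\cm_{g_j,1}$ of dimension at most $\max(0, g_j - |Q^j|)$, where $g_j$ is an appropriately chosen genus. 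Each step of the valid sequence increases $|Q^j|$ by $1$ or $2$; I want to invoke Lemma \ref{displaceInduction}, which (for a fixed genus $g$) produces a dimensionally proper point of $\tilde{\cw}_g(P^+_\Lambda)$ from one of $\tilde{\cw}_g(P^-_\Lambda)$. The key bookkeeping is the interplay between the genus and the partition size: reading Proposition \ref{displace} through Lemma \ref{displaceInduction}, a $1$-linked step should be accompanied by incrementing the genus (this is the ``add an elliptic bridge'' move, and it is exactly the step that consumes one of the $\delta(P)$ units of difficulty and keeps the fiber dimension from growing), whereas a $2$-linked step is performed at fixed genus.

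More precisely, I will set up the induction so that after processing the $j$-th partition we are working in genus $g_j = \frac12(|Q^j| + (\text{number of }1\text{-linked steps used so far}))$ — equivalently, $g_j$ increases by $1$ on a $1$-linked step and by $0$ on a $2$-linked step that raises $|Q|$ by $2$, while $|Q^j| + (\text{difficulty used})$ always increases by exactly $2$. This guarantees $g_N = \frac12(|P| + \delta(P))$, the threshold in the statement, and the monotonicity $g \ge g_N$ is handled at the end by a trivial further sequence of $1$-linked steps (appending empty columns changes nothing about the partition but raises the genus). The base case $Q^0 = \emptyset$, $g_0 = 0$ (or the smallest genus needed for Lemma \ref{twpgdr}'s hypothesis $r \ge g-1$ to bite — here $r = g$ so it is automatic) is the trivial statement that the generic twisted Weierstrass point is dimensionally proper, which is immediate since $\tilde{\cw}_g(\emptyset) \cong \cm_{g,1}$ up to removing the Weierstrass locus, of dimension $3g-2 = (4g-2) - 0$. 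The inductive step is a direct application of Lemma \ref{displaceInduction}: given the dimensionally proper point for $Q^{j}$ with fiber dimension $\le \max(0, g_j - |Q^j|)$, the linked pair $(Q^j, Q^{j+1})$ furnishes $\Lambda$ with $(Q^j)^-_\Lambda = Q^j$ and $(Q^j)^+_\Lambda = Q^{j+1}$, and since $|Q^{j+1}| - |Q^j| \le 2$ the hypothesis of the lemma is met. The fiber-dimension bookkeeping then follows because the lemma increases the fiber dimension by at most $1$ (and only when $Q^{j+1} = Q^j$, which does not occur here since sums strictly increase, so in fact it increases by at most $0$ on $2$-linked steps at fixed genus — wait, that needs care).

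The main obstacle, and the step deserving the most care, is exactly this fiber-dimension tracking. Lemma \ref{displaceInduction} says the new fiber has dimension at most $d$ when $P^+_\Lambda \ne P^-_\Lambda$. On a $2$-linked step at fixed genus $g$, we go from fiber dimension $\le \max(0, g - |Q^j|)$ to fiber dimension $\le \max(0, g-|Q^j|) \le \max(0, g - |Q^{j+1}|) + 2$ — the target bound is $\max(0, g - |Q^{j+1}|)$ and $|Q^{j+1}| = |Q^j| + 2$, so the old bound is too large by up to $2$, but the lemma only saves us a drop of at most the difference between $d$ and $d$, i.e. nothing extra beyond ``at most $d$''. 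So the honest statement is that a $2$-linked step preserves the \emph{actual} fiber dimension bound $d$, which is $\le \max(0, g - |Q^j|)$; I must instead phrase the inductive hypothesis as ``fiber dimension $\le \max(0, g - |Q^j|)$'' and check that $\max(0, g-|Q^j|)$ is itself what survives — but on a $2$-linked step $g$ is unchanged and $|Q^{j+1}| > |Q^j|$, so $\max(0, g-|Q^{j+1}|) \le \max(0, g-|Q^j|)$, meaning the bound from the lemma ($\le d \le \max(0,g-|Q^j|)$) might exceed the desired $\max(0, g-|Q^{j+1}|)$. The resolution is that on a $1$-linked step the genus goes up by $1$ while $|Q|$ goes up by $1$, so $g - |Q|$ is unchanged, and on a $2$-linked step $g$ is fixed while $|Q|$ goes up by $2$, so $g-|Q|$ drops by $2$; I need the \emph{actual} fiber dimension (not just the bound) to drop accordingly, which requires arguing that on a $2$-linked step, since $P^+_\Lambda \ne P^-_\Lambda$, the fiber dimension genuinely drops — and indeed Proposition \ref{displace}'s proof shows the new fiber lies over a locus in $\mathrm{Pic}$ cut out by two further conditions, dropping the dimension by $2$. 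So the cleanest route is to thread this stronger ``$\le \max(0, g-|P|)$, and each linked step changes $g - |P|$ by $0$ ($1$-linked) or $-2$ ($2$-linked), matched by the fiber dimension behavior'' through the induction, appealing to the dimension count inside Proposition \ref{displace} rather than only its black-box conclusion. Once the fiber-dimension accounting is pinned down this way, assembling the valid sequence of length realizing $\delta(P)$, running the induction, and padding with $1$-linked steps to reach arbitrary $g \ge \frac12(|P| + \delta(P))$ completes the proof.
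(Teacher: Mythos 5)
Your overall strategy --- climb from $\emptyset$ to $P$ along a valid sequence, applying Lemma \ref{displaceInduction} at each link --- is the intended one (the paper leaves this deduction implicit), but your genus bookkeeping contains an internal contradiction that derails the rest. Every displacement step, whether $1$-linked, $2$-linked, or trivial, is the \emph{same} geometric move: attach an elliptic bridge and smooth, which raises the genus by exactly $1$. (The statement of Lemma \ref{displaceInduction} is misprinted --- its conclusion should read $\tilde{\cw}_{g+1}(P^+_\Lambda)$, as the proof via Proposition \ref{displace} and the use of ``$\tilde{\cw}_{g+k}(P^k)$'' in the proof of Theorem \ref{Imprim} make clear; this is also the only reading under which the threshold $\frac12(|P|+\delta(P))$, the length of an optimal valid sequence, makes sense.) Your formula $g_j = \frac12(|Q^j| + \#\{\mbox{$1$-linked steps so far}\})$ correctly encodes this, since it increases by $1$ on \emph{both} kinds of step; but your repeated gloss that ``a $2$-linked step is performed at fixed genus'' contradicts it, and the ensuing analysis follows the gloss rather than the formula. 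With the correct accounting, $g_j - |Q^j|$ is unchanged by a $1$-linked step and drops by $1$ (not $2$) on a $2$-linked step; if you start at genus $0$ and use a sequence with no trivial steps, then $g_j \le |Q^j|$ throughout, the target fiber bound is identically $0$, and the lemma's conclusion ``fiber dimension at most $d$'' closes the induction. There is no need for the fiber dimension to ``genuinely drop by $2$,'' and that claim is not supported by Proposition \ref{displace} anyway: its proof computes the dimension of the image of the elliptic-tail aspect in $\textrm{Pic}^{d+1}_{1,2}$ and then invokes semicontinuity, which yields only ``at most $d$.''

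The second, independent gap is your treatment of $g > \frac12(|P|+\delta(P))$. Padding at the end with genus-raising steps that fix the partition (these are \emph{trivial}, i.e.\ $P^+_\Lambda = P^-_\Lambda$, not ``$1$-linked,'' which by definition raises $|P|$ by $1$) costs up to $+1$ in fiber dimension each time, so this route only yields the bound $g - \frac12(|P|+\delta(P))$, which exceeds the claimed $\max(0,g-|P|)$ whenever $\delta(P) < |P|$ --- and the application to Theorem \ref{mainTheorem} needs fiber dimension exactly $0$ throughout the range $\frac12(|P|+\delta(P)) \le g < |P|$. The fix is to use a \emph{longer} valid sequence rather than an optimal one plus padding: since any $2$-linked pair can be refined into two $1$-linked steps (via singleton progressions), for each $g$ with $\frac12(|P|+\delta(P)) \le g \le |P|$ there is a valid sequence with exactly $2g-|P|$ one-linked and $|P|-g$ two-linked steps, hence exactly $g$ steps, all non-trivial; running the induction along it reaches $P$ at genus $g$ with fiber dimension $0$. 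Only for $g > |P|$ do you append trivial steps, namely $g-|P|$ of them after the all-$1$-linked sequence, and the count then gives exactly $\max(0,g-|P|)$. (Your base case is also off: $\tilde{\cw}_g(\emptyset)$ is all of $\mathcal{P}ic^0_g \times_{\cm_g} \cm_{g,1}$, of dimension $4g-2$ and fiber dimension $g$ over $\cm_{g,1}$; the identification with $\cm_{g,1}$ minus the Weierstrass locus is for $P=(g)$, not $P=\emptyset$. This is harmless at $g=0$ but symptomatic of the same bookkeeping slippage.)
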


To prove theorem \ref{mainTheorem}, we are interesting in bounding the difficulty of ``box-shaped'' partitions, i.e. partitions of the form $(a^b)$. The table below shows some experimental data about the difficulties of these partitions for various values of $a$ and $b$.

\begin{center}$
\begin{array}{r|ccccccccccc}
&2&3&4&5&6&7&8&9&10&11&12\\\hline
2&2&4&4&6&6&6&6&8&8&10&10\\
3&4&5&6&7&6&7&8&7&6&7&6\\
4&4&6&4&6&6&8&4&6&6&6&4\\
5&6&7&6&7&6&5&6&5&4&5&6\\
6&6&6&6&6&6&4&4&4&4&4&4\\
7&6&7&8&5&4&7&4&5&6&5&6\\
8&6&8&4&6&4&4&4&4&4&6&4\\
9&8&7&6&5&4&5&4&5&4&5&4\\
10&8&6&6&4&4&6&4&4&4&4&6\\
11&10&7&6&5&4&5&6&5&4&7&4\\
\end{array}
$\end{center}

On the basis of these experimental data, we make the following conjecture.

\begin{conj}\label{boxConjecture}
There is a constant $C$ such that for all positive integers $a,b \geq 3$, $\delta((a^b)) \leq C$.
\end{conj}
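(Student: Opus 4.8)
The natural framework is the Maya-diagram (bead) model of partitions. A partition $\lambda$ corresponds to the bead set $B(\lambda) = \{\lambda_i - i : i \ge 0\} \subset \textbf{Z}$, whose complement is bounded above; adding a box at the outer corner of row $i$ is the same as moving the bead at position $\lambda_i - i$ one step to the right into the (necessarily empty) adjacent cell, the condition $P_{i-1} > P_i$ being exactly the condition that this cell is empty. Unwinding the definitions of Section \ref{difficulty}: a $1$-linked step moves one bead one cell to the right (take $\Lambda$ to be the singleton at that bead's position); a $2$-linked step, for a chosen modulus $m$ and residue $c$, simultaneously advances every bead whose position is $\equiv c \pmod m$ and whose right neighbour is empty, and is legitimate exactly when there are precisely two such beads and the downward displacement of the result undoes the move. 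I would first record these conditions carefully; the upshot is that a $2$-linked step is available whenever there are two ``active'' beads (beads with empty right neighbour) sharing a residue class $c \bmod m$ not shared in that class by any other active bead (together with a dual condition making the downward displacement its inverse). I would also use the identity $\delta(P) = \delta(P^*)$ --- which follows because transposing a partition and negating the progression interchanges the two displacements, so valid sequences to $P$ correspond to valid sequences to $P^*$ with the same number of $1$-linked steps --- to reduce to $a \ge b \ge 3$; the problem then becomes to move a block of $b$ consecutive beads rightward by $a$ while leaving fixed the ``wall'' of beads below it.

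The engine of the construction should be the \emph{convoy}: a pair of beads at a fixed gap $m$ which advances in lockstep, one free ($2$-linked) step per cell, for as long as nothing interferes. The plan would be a three-phase argument: a bounded number of $1$-linked steps to set up a few suitably spaced convoys; a long run of free $2$-linked steps sliding them into place; and a bounded number of $1$-linked steps to re-compact the beads into the contiguous target block. Equivalently, it suffices to prove the single uniform statement that for $b \ge 3$ one can ``fatten'' a box in one coordinate --- pass from $(a')^b$ to $(a^b)$ --- at the cost of only an absolute constant number of $1$-linked steps; applying this twice (with the transpose in between) then reduces every $(a^b)$ with $a,b \ge 3$ to the fixed partition $(3^3)$, giving the conjecture with $C = \delta((3^3)) + O(1)$. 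The hypothesis $b \ge 3$ is essential: the boxes $(a^b)$ with $\min(a,b) \le 2$ have unbounded difficulty (a single row $(n)$, or two equal rows $(n,n)$, already requires a number of $1$-linked steps growing with $n$), so one really needs at least three beads in the block to have room to maneuver.

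The main obstacle --- and the reason this remains a conjecture --- is the arithmetic of the moduli. The fixed wall beads (and any fixed part of the block) must never be advanced, so every $2$-linked step must be fired on a residue class avoiding their positions; since a convoy of gap $m$ cycles through all residues mod $m$ as it slides, it collides with a fixed bead once every $m$ cells, forcing a $1$-linked correction --- so convoys must have gap comparable to the (unbounded) sliding distance. Yet creating a large gap from the initially contiguous block is itself delicate: a convoy preserves its gap, and any two equally-spaced beads are ``locked together'' with a third bead at the same spacing, so one cannot peel off a sub-pair without the arithmetic-progression obstruction biting. Threading between these constraints --- using the third and further beads (this is where $b \ge 3$ enters) to break up progressions and open large gaps cheaply, while keeping the $\Theta(b)$ residues in play under control through each sweep --- is exactly the hard combinatorial core. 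I would attack it by hand on the boxes $(3^n)$ first, calibrating a candidate choreography of moduli against the experimental table (which stays between $4$ and $8$ for $3 \le a,b \le 12$, suggesting the eventual constant is small) before attempting the general fattening lemma; note that the partial result Theorem \ref{mainTheorem} only requires a bound on $\delta$ linear in $|P|$, which sidesteps this difficulty entirely.
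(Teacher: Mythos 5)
This statement is a conjecture, and the paper does not prove it: the text immediately following it says ``We suspect that conjecture \ref{boxConjecture} is tractable, but do not yet have a proof,'' and Section \ref{linearSeries} establishes only the linear bound $\delta((a^b)) \leq a + 3b - 5$ (Lemma \ref{boxlike}). Your submission is likewise not a proof, and to your credit it says so explicitly. So there is no proof-to-proof comparison to make; what can be assessed is whether your plan adds anything to, or diverges from, what the paper actually does.

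The translation into the Maya/bead model is correct (the condition $P_{i-1} > P_i$ is indeed the emptiness of the target cell, and a $2$-linked step is a simultaneous advance of exactly two active beads in a common residue class, with the downward displacement acting as inverse), and your ``convoy'' --- a pair of beads at fixed gap $m$ sliding in lockstep by free $2$-linked steps --- is precisely the mechanism of the paper's proof of Lemma \ref{boxlike}: the intermediate partitions $P_{k,i} = (a^k\ (i+\tfrac12 a)\ i)$ are a two-bead convoy of gap $\tfrac12 a + 1$ carrying one row at a time across the box, paying $O(1)$ one-linked corrections per row and hence $O(b)$ in total, plus $O(a)$ to seed and flush. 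Your diagnosis of why this cannot be pushed to a constant by the same means is also accurate: a convoy of gap $m$ collides with each fixed wall bead once per $m$ cells, so the gap must be comparable to the sliding distance, while opening such a gap from a contiguous block runs into the three-in-arithmetic-progression obstruction. But that diagnosis is where the proposal ends; the ``general fattening lemma'' that would reduce $(a^b)$ to $(3^3)$ at bounded cost is asserted as the thing to be proved, not proved. The one checkable side claim --- that $\delta((n))$ and $\delta((n,n))$ grow with $n$, so $\min(a,b)\geq 3$ is necessary --- is consistent with the displayed table and with the fact that a lone bead can only ever move by $1$-linked steps. In short: a reasonable research plan that recapitulates the idea behind Lemma \ref{boxlike} and correctly locates the open difficulty, but no progress on the conjecture itself.
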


The assumption $a,b \geq 3$ is harmless, since if $a=2$ then the corresponding twisted Weierstrass points detect $\fg^1_d$s, whose moduli are well-understood.

\begin{remark}
It is apparent from the definition that $\delta(P) = \delta(P^*)$ where $P^*$ is the conjugate partition. This is not surprising, in light of the duality $\tilde{\cw}_g(P) \cong \tilde{\cw}_g(P^*)$.
\end{remark}

\begin{remark}
Corollary \ref{diffToPoints} is equivalent, in the special case of box-shaped partitions, to saying that if $P = ((r+1)^{g-d+r})$, and $\rho \geq -g + \delta(P)$, then $\tilde{\cw}^r_{d,g}$ has a dimensionally proper point. So if conjecture \ref{boxConjecture} is true, it would should that that the ``phase transition'' from dimensionally proper to improper occurs very close to $\rho = -g$, as we expect. We suspect that conjecture \ref{boxConjecture} is tractable, but do not yet have a proof. In section \ref{linearSeries}, we prove a somewhat weaker bound, linear in $a$ and $b$, that is sufficient to give theorem \ref{mainTheorem}.
\end{remark}

\section{Primitive Weierstrass points}\label{primitive}

As an example and first application of the techniques described above, we will prove one of the main results of \cite{EH87} on the existence of dimensionally proper Weierstrass points.

To state the result requires a bit of terminology. A subset $S \subseteq \mathbb{Z}_{\geq 0}$ that contains $0$ and is closed under addition is called a \emph{numerical semigroup}. The size of the complement is called the \emph{genus}. The sum of the elements of the complement, minus $\binom{g+1}{2}$, is called the \emph{weight}. A semigroup is called \emph{primitive} if twice the smallest positive element is greater than all the gaps; this is equivalent to saying that for all sets $S' \geq S$ whose complement is size $g$, $S'$ is also a semigroup. Let $\cc_S \subseteq \cm_{g,1}$ be the locus of Weierstrass points with semigroup $S$; a point of $\cc_S$ is \emph{dimensionally proper} if the local codimension of $\cc_S$ in $\cm_{g,1}$ is equal to the weight of $S$.

\begin{thm}[Eisenbud and Harris]\label{EHTheorem}
Let $S$ be a primitive numerical semigroup of weight at most $g-2$. Then $\cc_S$ has dimensionally proper points\footnote{These theorem was improved by Komeda \cite{K}, who replaced $g-2$ by $g-1$; see remark \ref{KomedaRemark}.}.
\end{thm}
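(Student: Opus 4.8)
The plan is to translate the statement entirely into the combinatorial language of difficulty developed in Section~\ref{difficulty}, and then to bound $\delta$ for the partitions that correspond to primitive semigroups. Recall that a Weierstrass point with semigroup $S$ of genus $g$ corresponds to a twisted Weierstrass point with line bundle $\cl = \co_C$ and partition $P = P(C, \co_C, p)$; the entries of this partition are $(n+g) - s_n$, where $s_0 < s_1 < \cdots$ enumerates $S$. Under this correspondence $|P|$ equals the weight of $S$, and a point of $\cc_S$ is dimensionally proper in the sense of Section~\ref{primitive} precisely when $(C,\co_C,p)$ is a dimensionally proper point of $\tilde{\cw}_g(P)$ in the sense of Section~\ref{twisted} (one checks the dimension counts match: $\dim \cc_S = (3g-3) - |P|$ and $(4g-2) - |P| - \dim \operatorname{Pic}^0 = (3g-2) - |P| - 1$, accounting for the fact that fixing $\cl = \co_C$ removes the $g$ Picard directions while $\cc_S$ sits in $\cm_{g,1}$, not in $\mathcal{P}\mathrm{ic}^0_g \times_{\cm_g}\cm_{g,1}$). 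So by Corollary~\ref{diffToPoints}, it suffices to show that if $S$ is primitive of weight at most $g-2$, then $g \geq \tfrac12(|P| + \delta(P))$, i.e. that $\delta(P) \leq 2g - 2|P| = 2g - 2\,\mathrm{wt}(S)$. Since $\mathrm{wt}(S) \leq g-2$ gives $2g - 2\,\mathrm{wt}(S) \geq 4$, it is enough to prove $\delta(P) \leq 4$ — in fact I expect one can do slightly better and the real content is a uniform bound.

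The key structural input is the meaning of primitivity in partition language. If $m$ is the smallest positive element of $S$, then primitivity says every gap is less than $2m$; equivalently the partition $P$ has exactly $m-1$ rows (since the gaps less than $m$ together with $m$ itself determine the first $m$ elements of $S$), and more to the point the corners of the Young diagram of $P$ are tightly controlled. First I would observe that a primitive semigroup is exactly determined by which of the numbers $m+1, m+2, \ldots, 2m-1$ are gaps — equivalently by which numbers in a single residue-free interval are "missing". The second observation is that this data is precisely a subset of an interval of length $m-1$, and translating back, $P$ is obtained from the empty partition by a short sequence of displacements along singleton or two-element arithmetic progressions: filling in the columns one residue-class at a time. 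Concretely, I would try to exhibit a valid sequence from $\emptyset$ to $P$ in which essentially all steps are $2$-linked — each $2$-linked step adding a dominated pair of boxes at a corner governed by a two-element progression — with only a bounded number of $1$-linked steps needed to "seed" the process. The primitivity hypothesis is what guarantees the corners never collide in a way that would force extra $1$-linked moves; this is exactly the phenomenon that fails for imprimitive semigroups, which is the content of Theorem~\ref{Imprim}.

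The main obstacle will be the second paragraph's claim: producing the explicit valid sequence realizing $\delta(P) \leq 4$ (or whatever the sharp small constant is) for every primitive $S$, handling the corner cases where the Young diagram of $P$ is nearly rectangular or nearly empty. I would organize this by induction on the number of rows of $P$, or alternatively by induction on the number of "distinct part sizes," peeling off the topmost corner of the diagram using a single displacement along a two-element progression matched to the relevant gap, and checking the residual partition is still of the shape produced by a primitive semigroup of smaller $m$ (or reduces to a previously handled base case). The bookkeeping needed to verify that the downward-displacement stability hypothesis $P^-_\Lambda = P$ holds at each stage — so that Lemma~\ref{displaceInduction} applies and the limit linear series stays refined — is where the argument will require care, and matching the count of $1$-linked steps against $2g - 2\,\mathrm{wt}(S)$ to get exactly the bound $\mathrm{wt}(S) \leq g-2$ (rather than something weaker) is the delicate endgame. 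Since the paper says this proof is "essentially the same as" that of Eisenbud and Harris, I expect the geometric smoothing is entirely absorbed into Proposition~\ref{displace} and what remains is purely this combinatorial packaging.
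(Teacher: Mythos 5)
Your overall strategy---identify $\cc_S$ with $\tilde{\cw}_g(P)$ and bound the difficulty $\delta(P)$ so that Corollary \ref{diffToPoints} applies---is exactly the paper's, but a numerical error at the very start derails the rest. For a Weierstrass semigroup $0 = s_0 < s_1 < \cdots$ the partition $P_n = (g+n) - s_n$ has $P_0 = g$, and one checks that $|P| = g + \wt(S)$, \emph{not} $\wt(S)$: for instance the ordinary semigroup $\{0, g+1, g+2, \dots\}$ has weight $0$ but $P = (g)$ and $|P| = g$ (this is the paper's first example, and with this normalization the two notions of ``dimensionally proper'' agree on the nose, with no Picard-direction bookkeeping needed). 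Consequently the condition $g \geq \tfrac12(|P| + \delta(P))$ from Corollary \ref{diffToPoints} reads $\delta(P) \leq 2g - |P| = g - \wt(S)$ (note also the algebra slip: that inequality gives $\delta(P) \le 2g - |P|$, not $2g - 2|P|$). This target cannot be replaced by a uniform constant such as $4$: since $P_0 = g$ and a single displacement increases $P_0$ by at most $1$, every valid sequence from the empty partition to $P$ has at least $P_0$ steps, whence $\delta(P) \geq 2P_0 - |P| = g - \wt(S)$, which is unbounded in $g$ (it equals $g$ for the ordinary semigroup). So the statement you set out to prove in your second and third paragraphs is false, and the ``slack'' you were counting on does not exist.

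What is actually needed, and what the paper proves, is that the trivial lower bound is attained exactly: for any partition with $P_0 - P_0^* \ge 2P_1 - 2$ (primitivity) and $|P| \le 2P_0 - 2$ (weight at most $g-2$), one has $\delta(P) = 2P_0 - |P|$. Equality forces an optimal valid sequence to have exactly $P_0 = g$ steps, each raising $P_0$ by $1$, with exactly $\wt(S)$ of them $2$-linked. The paper's induction peels off one such $2$-linked pair at a time, displacing downward along the progression generated by $P_0 - 1$ and $P_k - k - 1$, where $k$ is the largest index with $P_k = P_1$ (so the top row and the lowest row of length $P_1$ each lose one box); the only real work is verifying that primitivity and $|P| \le 2P_0 - 2$ survive this operation, and the latter hypothesis---i.e.\ weight at most $g-2$ rather than $g$---is used precisely to rule out the one corner collision that could destroy primitivity (Figure \ref{figure:edge}). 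Your instinct that primitivity prevents corner collisions is the right one, and your peeling-off picture is close to the actual mechanism, but the induction must be organized around showing $\delta(P) = 2P_0 - |P|$, not around a constant bound on $\delta(P)$, so as written the argument cannot close.
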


In fact the primitivity assumption is not an artifact of the proof, but is a crucial assumption. Our method also gives an easy proof of the following.

\begin{thm}\label{Imprim}
If $S$ is a non-primitive semigroup, then the moduli space $\cc_S$ of pointed curves with Weierstrass semigroup $S$ has no dimensionally proper points.
\end{thm}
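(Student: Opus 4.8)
The plan is to exhibit, through every point of $\cc_S$, a subvariety of dimension strictly larger than the expected value $\dim\cm_{g,1}-w(S)=(3g-2)-w(S)$. The first step is to translate into the twisted language of the preceding sections. Since a degree-$0$ line bundle with a nonzero section is trivial, and $0\in S$, having twisted Weierstrass sequence $S$ forces $\cl\cong\co_C$; hence $\cc_S$ is literally $\tilde\cw_g(P(S))$, where $P(S)$ is the twisted Weierstrass partition attached to the semigroup and $|P(S)|=g+w(S)$. In particular a point of $\cc_S$ is dimensionally proper exactly when equality holds in \ref{weightBound}, so the theorem is equivalent to saying that \ref{weightBound} is strict at every point of $\tilde\cw_g(P(S))$ when $S$ is non-primitive.

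Next I would unpack non-primitivity. If $s_1$ denotes the least positive element of $S$ and $\ell$ the largest gap, then $S$ is non-primitive precisely when $\ell\ge 2s_1$, in which case $\ell-s_1$ and $\ell-2s_1$ are again gaps, and every $(C,p)\in\cc_S$ admits a degree-$s_1$ map to $\textbf{P}^1$ totally ramified over $p$ (the hyperelliptic Weierstrass points, $S=\langle 2\rangle$, are the prototype). In partition terms, $\ell\ge 2s_1$ says that two corners of the Young diagram of $P(S)$ lie on a common arithmetic progression $\Lambda$ of difference $s_1$ (compare figure \ref{figure:corners}), so that $P(S)$ is \emph{not} its own downward displacement with respect to $\Lambda$. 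The point of sections \ref{bridges}--\ref{difficulty} is that such a pair of corners is controlled by a single displacement parameter rather than two independent ramification conditions; the task is to convert this one ``wasted'' parameter into one extra dimension of $\cc_S$.

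To make this quantitative I would degenerate. Using lemma \ref{twpgdr} with $r=g-1$, view $(C,\co_C,p)$ as a point of $\tilde\cg^{r}_{r+g,g}(a)$ with $a_i=i+P(S)_{r-i}$, and specialize $C$ to a compact-type curve $X=C_0\cup_q E$ obtained by attaching an elliptic tail at a general point (with $p\in C_0$), equipped with the refined limit linear series whose $E$-aspect is the one produced by lemma \ref{bridge3} for the progression $\Lambda$ above. Summing the dimensions of the two aspect loci --- for $C_0$, a genus induction or the primitive case of Theorem \ref{EHTheorem}; for $E$, the explicit dimension computation at the end of the proof of proposition \ref{displace} --- together with the moduli of $(E,q)$, one finds that the resulting boundary stratum has dimension strictly exceeding $(3g-2)-w(S)$, the surplus coming exactly from the corner configuration forced by $\ell\ge 2s_1$. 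Since the regeneration theorem \ref{regeneration} (read in reverse) identifies this stratum as lying in $\overline{\cc_S}$, we get $\dim\cc_S>(3g-2)-w(S)$, and because every $(C,p)\in\cc_S$ specializes to such an $X$ with the semigroup at $p$ unchanged, the inequality holds locally everywhere. The main obstacle is precisely this bookkeeping: selecting the degeneration so that the predicted redundancy is visibly realized, bounding the aspect loci from below, and checking that specialization does not enlarge the semigroup at $p$ (which would land the stratum in $\overline{\cc_{S'}}$ for some $S'\ne S$ instead of in $\overline{\cc_S}$).
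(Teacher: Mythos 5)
Your strategy runs the degeneration machinery in the wrong direction, and this is a genuine gap rather than bookkeeping. The regeneration theorem (\ref{regeneration}) produces smooth curves from dimensionally proper limit linear series on reducible curves; it gives no license to start from an arbitrary point $(C,p)\in\cc_S$, degenerate $C$ to a chosen $X=C_0\cup E$, and assert that the limit is the particular refined limit series built by lemma \ref{bridge3}, with the Weierstrass semigroup at $p$ preserved. Determining the limit of a linear series under an arbitrary degeneration is exactly the hard problem this framework is designed to sidestep, and non-primitive semigroups are precisely where such control is delicate. Moreover, even if you did exhibit a boundary stratum of dimension exceeding $(3g-2)-w(S)$ inside $\overline{\cc_S}$, that only bounds from below the dimension of some component whose closure meets that stratum; the statement to be proved is that the local dimension exceeds the expected value at \emph{every} point of \emph{every} component, and a single large stratum does not deliver that.

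The paper's proof goes the other way: it is a contradiction argument using only the forward machinery. Assume $\cc_S\cong\tilde{\cw}_g(P)$ (your identification $\cl\cong\co_C$ is correct and is used) has a dimensionally proper point. Repeated upward displacement along singleton arithmetic progressions (lemma \ref{displaceInduction}) lengthens only the top row of $P$, producing dimensionally proper points of $\tilde{\cw}_{g+k}(P^k)$ for every $k\ge 0$, whose associated sequences are $S^k=\{0,s_1+k,s_2+k,\dots\}$. Since $0\in S^k$, the line bundle is again trivial, so $S^k$ must be an honest Weierstrass semigroup, hence closed under addition. But non-primitivity supplies a gap $f>2s_1$ of $S$, and taking $k=f-2s_1$ gives $s_1+k\in S^k$ while $2(s_1+k)=f+k\notin S^k$, a contradiction. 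Your observation that $\ell-s_1$ and $\ell-2s_1$ are again gaps is the right combinatorial germ, but the paper converts it into a violation of the semigroup property \emph{after} displacement rather than into an excess-dimension count \emph{before} degeneration. If you want a direct dimension-count proof, the viable route is the effective-weight argument of \cite{EW}, not regeneration read in reverse.
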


Although this fact is not explicitly proved in \cite{EH87}, there are other ways to establish it; we give another argument in \cite{EW}, based on the notion of the effective weight of a semigroup.

First we re-express theorem \ref{EHTheorem} using the notation of this paper. Notice that $\cc_S \cong \tilde{\cw}_g(P)$, where $P$ is the partition given by $P_n = (g+n) - s_n$ (where $0 = s_0 < s_1 < s_2 < \cdots$ are the elements of $S$). That $S$ is a primitive semigroup is equivalent to saying that $2(g+1 - P_1) \geq (g+P^*_0)$ (where $P^*$ is the dual partition), which is equivalent, using the fact that $P_0 = g$, to $P_0 - P^*_0 \geq 2P_1 -2$. Thus theorem \ref{EHTheorem} follows from the following, by lemma \ref{diffToPoints}.

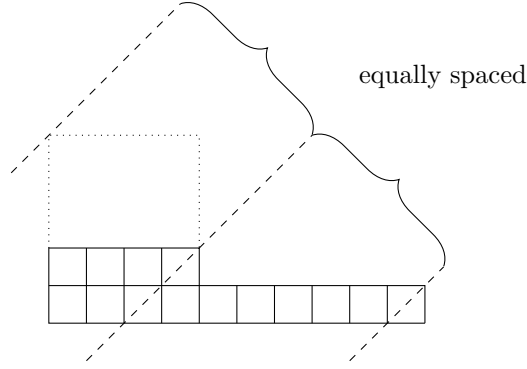
\begin{figure}
\begin{center}
\begin{tikzpicture}[scale=0.5]
\draw (0,2) -- (0,0) -- (10,0);
\draw (0,2) -- (4,2) -- (4,0);
\draw (1,2) -- (1,0);
\draw (2,2) -- (2,0);
\draw (3,2) -- (3,0);
\draw (0,1) -- (10,1) -- (10,0);
\draw (5,1) -- (5,0);
\draw (6,1) -- (6,0);
\draw (7,1) -- (7,0);
\draw (8,1) -- (8,0);
\draw (9,1) -- (9,0);
\draw[style=dotted] (0,2) -- (0,5) -- (4,5) -- (4,2);
\draw[style=dashed] (-1,4) -- (3.5,8.5);
\draw[style=dashed] (1,-1) -- (7,5);
\draw[style=dashed] (8,-1) -- (10.5,1.5);
\draw[decorate,decoration={brace, amplitude=0.4cm}] (3.5,8.5) -- (7,5);
\draw[decorate,decoration={brace, amplitude=0.4cm}] (7,5) -- (10.5,1.5);
\draw (8,6) node[above right] {$\mbox{equally spaced}$};
\end{tikzpicture}
\caption{Visual representation of the primitivity condition. Draw diagonals through the upper-right corners of the first two rows of the Young diagram, and also a third diagonal, equally spaced from the second. Then the rest of the Young diagram must fit below this third diagonal.}
\label{figure:prim}
\end{center}
\end{figure}

\begin{lemma}
Let $P$ be any partition such that $P_0 - P^*_0 \geq 2P_1 -2$ and $|P| \leq 2 P_0 -2$. Then $\delta(P) = 2P_0 - |P|$.
\end{lemma}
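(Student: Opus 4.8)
The plan is to establish $\delta(P)\ge 2P_0-|P|$ and $\delta(P)\le 2P_0-|P|$ separately; only the second inequality uses the hypotheses. For the first, consider the quantity $\phi(Q)=2Q_0-|Q|$. If $Q'$ follows $Q$ in a valid sequence, then $Q'=Q^+_\Lambda$ for some arithmetic progression $\Lambda$, and since displacement raises each row by at most one box, $Q'_0-Q_0\in\{0,1\}$; hence $\phi(Q')-\phi(Q)=2(Q'_0-Q_0)-(|Q'|-|Q|)$ is at most $1$ if the step is $1$-linked and at most $0$ if it is $2$-linked. Summing along a valid sequence from the empty partition to $P$ gives $2P_0-|P|=\phi(P)-\phi(\emptyset)\le\#\{1\text{-linked steps}\}$, so $\delta(P)\ge 2P_0-|P|$ with no hypothesis needed. (Equality throughout this estimate forces every step of an optimal valid sequence for $P$ to raise the first row by exactly one box, which is the structural fact the construction below is built on.)

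For the reverse inequality I would induct on $|P|$ among partitions satisfying the two hypotheses, checking the base case $P=(2)$ directly. In the inductive step I produce a partition $\bar P$ with $\bar P_0=P_0-1$ that again satisfies both hypotheses, together with an arithmetic progression $\Lambda$ making $(\bar P,P)$ a $1$-linked pair (when $|\bar P|=|P|-1$) or a $2$-linked pair (when $|\bar P|=|P|-2$). Either way, appending the step $\bar P\to P$ to an optimal valid sequence for $\bar P$ and using the inductive hypothesis $\delta(\bar P)=2\bar P_0-|\bar P|$ yields $\delta(P)\le 2P_0-|P|$ after a one-line computation, which closes the induction. The partition $\bar P$ is obtained by deleting the last box of the first row of $P$, together---only when that deletion alone would push $\bar P$ out of the hypothesis region---with one further box: the last box of row $1$ if $P_1>P_2$, and the last box of the last row if $P_1=P_2$. (In the latter case one checks that if $P_0-P^*_0=2P_1-2$ then the last row of $P$ has length $1$; this is what keeps $\bar P$ inside the region.) Exactly one box is deleted when the inequalities $P_0-P^*_0\ge 2P_1-2$ and $|P|\le 2P_0-2$ are both strict, and two boxes when one of them is tight. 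For $\Lambda$ I take the singleton $\{P_0-1\}$ in the one-box case, and in the two-box case the progression through the diagonals of the two corners of $\bar P$ being turned outward, with common difference equal to the gap between those two diagonals. That $\bar P$ again satisfies both hypotheses follows in each case by elementary bookkeeping; primitivity enters here in particular through the fact that it forces $P_0>P_1$, so that row $0$ always has a removable corner.

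The step I expect to be the main obstacle is checking, in each case, that the chosen $\Lambda$ induces a \emph{refined} linked step: one must verify that $(\bar P)^+_\Lambda$ equals $P$ and not some larger partition, and that $(\bar P)^-_\Lambda$ equals $\bar P$. Because $\Lambda$ is chosen with common difference equal to the gap between the two intended corner-diagonals of $\bar P$, this amounts to showing that every other corner-diagonal of $\bar P$---and every inner-corner diagonal relevant to the downward displacement---lies strictly between two consecutive terms of $\Lambda$. Enumerating the corners of $\bar P$ is straightforward, and the inequalities needed on their diagonals are exactly those furnished by primitivity, $P_0-P^*_0\ge 2P_1-2$, together with $|P|\le 2P_0-2$. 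This is essentially the picture of Figure~\ref{figure:prim}: the ``third, equally spaced'' diagonal drawn there is the term of $\Lambda$ immediately below the diagonal through the corner of row $1$, and the primitivity condition says precisely that the rest of the Young diagram---hence every remaining corner of it---lies below that third diagonal.
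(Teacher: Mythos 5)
Your proposal is correct and follows essentially the same route as the paper: your lower bound is the paper's step-counting argument repackaged as a potential function, and your upper bound is the same induction by turning in the corner of row $0$ together with one other outward corner along the arithmetic progression through their diagonals, with primitivity (the ``third, equally spaced'' diagonal of Figure~\ref{figure:prim}) being exactly what guarantees the progression misses every other corner. The only differences are bookkeeping --- the paper takes only $2$-linked steps (pairing row $0$ with the last row of the $P_1$-block) down to a single-row base case, whereas you interleave $1$- and $2$-linked steps and pair with the last row when $P_1=P_2$ --- plus one caveat worth flagging: your parenthetical claim that $P_0-P^*_0=2P_1-2$ together with $P_1=P_2$ forces the last row to have length $1$ is only vacuously true, since that configuration already gives $|P|\ge P_0+2P_1+(P^*_0-3)=2P_0-1$, contradicting $|P|\le 2P_0-2$ (this is precisely how the paper dispatches its analogous subcase, so your inductive step survives, but for a different reason than the one you state).
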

\begin{proof}
First, notice that any valid sequence of partitions ending in $P$ must have at least $P_0$ steps, since $P_0$ can increase by at most $1$ at each step. This means that $\frac12 (|P| + \delta(P)) \geq P_0$, i.e. $\delta(P) \geq 2P_0 - |P|$. So it suffices to show the opposite inequality.

The opposite inequality follows by induction on $P$. As the base case, consider the case $P_1 = 0$. Then $|P| = P_0 = \delta(P)$, so the result follows. So assume that $P_1 > 0$. Let $k \geq 1$ be the largest integer such that $P_k = P_1$. Then let $\Lambda$ be the arithmetic progression generated by $P_0-1$ and $P_k-k-1$. The corresponding diagonal lines meet the Young diagram of $P$ at only two corners, both outward, at the ends of rows $0$ and $k$ (see figure \ref{figure:prim}). Thus $P^+_\Lambda = P$ and $P^-_{\Lambda}$ differs in exactly two places from $P$: $P_0$ and $P_k$ are both decreaseed by $1$. Now, it is immediate that $|P^-_{\Lambda}| \leq 2 (P^-_{\Lambda})_0 -2$. It remains to show that $(P^-_{\Lambda})_0 - (P^-_{\Lambda})^*_0 \geq 2(P^-_{\Lambda})_1 -2$. Since $P_0$ decreased by $1$ under the displacement, the only way that this inequality could fail is if $P_0^*$ is unchanged, $P_1$ is unchanged, and the inequality was sharp before, i.e. $P_0 - P_0^* = 2P_1 -2$. This would mean that $P_1 = P_2$ and the Young diagram meets the third diagonal in figure \ref{figure:prim}; see figure \ref{figure:edge}. But in this case, we would have $|P| \geq P_0 + 2P_1 + (P^*_0-3) = 2P_0 -1$, which contradicts the assumption that $|P| \leq 2P_0-2$. Hence $P^-_{\Lambda}$ satisfies the hypotheses of the lemma. Also, it is clear that $2P_0 - |P|$ is unchanged and $\delta(P^-_\Lambda) \geq \delta (P)$, so the desired inequality follows by induction.
 completing the induction.\end{proof}

\begin{figure}
\begin{center}
\begin{tikzpicture}[scale=0.5]
\draw (0,5) -- (0,0) -- (10,0);
\draw (0,5) -- (1,5) -- (1,0);
\draw (0,4) -- (1,4);
\draw (0,3) -- (4,3) -- (4,0);
\draw (0,2) -- (4,2);
\draw (0,1) -- (10,1) -- (10,0);
\draw (2,3) -- (2,0);
\draw (3,3) -- (3,0);
\draw (5,1) -- (5,0);
\draw (6,1) -- (6,0);
\draw (7,1) -- (7,0);
\draw (8,1) -- (8,0);
\draw (9,1) -- (9,0);
\draw[style=dotted] (1,5) -- (4,5) -- (4,3);
\draw[style=dashed] (-1,4) -- (3.5,8.5);
\draw[style=dashed] (1,-1) -- (7,5);
\draw[style=dashed] (8,-1) -- (10.5,1.5);
\end{tikzpicture}
\caption{The only situation where primitivity may fail after displacement, in the proof of theorem \ref{EHTheorem}}
\label{figure:edge}
\end{center}
\end{figure}
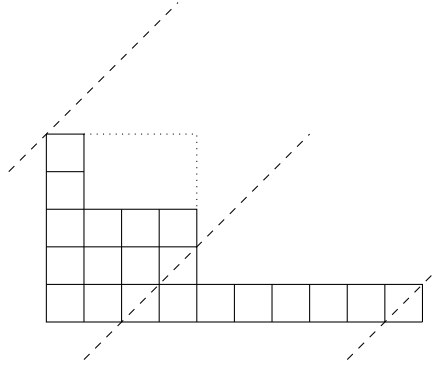

\begin{remark}\label{KomedaRemark}
Notice that the proof above very nearly shows the existence of all dimensionally proper Weierstrass points of weight less than $g$ (rather than $g-1$). If we attempt to prove this slightly stronger statement by an identical induction, we see that the inductive step fails only when $P_1 = P_2$, $P_3 = 1$, and $P_0^* = P_0-2P_1+2$ (i.e. the area enclosed by the dashed line in figure \ref{figure:edge} is empty). A different displacement works in this case, namely by turning in the first and last outward corner, unless $P_3 = 0$. So the only partitions that cannot be treated this way are $P = ((2m-1)\ m\ m)$. Komeda \cite{K} proved, by a different method, that dimensionally proper Weierstrass points corresponding to these partitions exist. So by adding these partitions as an additional base case, Komeda extended theorem \ref{EHTheorem} to all primitive semigroups of weight less than $g$.
\end{remark}

Using the same technique of displacement along elliptic curves, we can also prove the non-existence of dimensionally proper Weierstrass points. This result is substantially generalized, by a different method, in \cite{EW}. We include this proof because it demonstrated the capability of the technique of displacement to disprove the existence of dimensionally proper points as well.

\begin{proof}[Proof of theorem \ref{Imprim}]
Suppose for the sake of contradiction that $S = \{0, s_1, s_2, \cdots \}$ is a non-primitive semigroup such that $\cc_S$ has a dimensionally proper point. Let $P$ be the corresponding partition, so that $P_0 = g$ and $\tilde{\cw}_g(P)$ has a dimensionally proper point. Define $P^k$ to be the partition given by $P^k_0 = P_0 + k$ and $P^k_i = P_i$ otherwise. By displacing repeatedly along singleton arithmetic progressions, it follows that $\tilde{\cw}_{g+k} (P^k)$ has a dimensionally proper point (for each $k$). This corresponds to a dimensionally proper Weierstrass point in $\cc_{S^k}$, where $S^k = \{0, s_1+k, s_2+k, \cdots\}$. Since $S$ is not primitive, there exists a positive integer $f > 2s_1$ such that $f \not\in S$. Let $k = f - 2s_1$. Then $s_1 + k \in S^k$, but $2(s_1+k) = f+k \not\in S^k$. This is a contradiction; so $\cc_S$ cannot have any dimensionally proper points.
\end{proof}

\section{Special linear series}\label{linearSeries}

In order to prove the existence of a reasonably large class of dimensionally proper linear series, it suffices, by lemma \ref{diffToPoints} to bound the displacement difficulty of box-shaped partitions. We shall prove the following bound, which is likely to be very far from optimal, but is strong enough to give theorem \ref{mainTheorem}.

\begin{lemma} \label{boxlike}
Let $P$ be the partition $(a^b)$, i.e. the partition of the number $ab$ into $b$ equal parts, where $a,b \geq 2$. Then $\delta(P) \leq a + 3b -5 $.
\end{lemma}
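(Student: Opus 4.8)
The plan is to bound $\delta((a^b))$ by exhibiting an explicit valid sequence of partitions from the empty partition to $(a^b)$ in which at most $a+3b-5$ of the adjacent pairs are $1$-linked. Recall that a valid sequence has increasing sum and each step is $1$-linked or $2$-linked; the strategy is therefore to grow the Young diagram one ``diagonal layer'' at a time, building each row-block of the rectangle by a combination of $2$-linked moves (which turn two outward corners at once, along a genuine arithmetic progression with two or more terms) and a bounded number of $1$-linked ``priming'' moves needed to create a configuration of two equally-spaced outward corners. The key observation making $2$-linked steps abundant is that a box-shaped partition has a lot of translational symmetry along its main diagonal, so once a single ``staircase'' of a given shape has been built, it can be extended by arithmetic-progression displacements essentially for free.

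Concretely, I would argue by induction on $b$ (the number of rows), using Lemma~\ref{boxlike} for $(a^{b-1})$ as the inductive hypothesis once a partition of the form $(a^{b-1})$ has been reached inside the sequence. So the heart of the argument is the \emph{incremental} claim: given that $(a^{b-1})$ has been constructed by a valid sequence using at most $a+3(b-1)-5$ one-linked pairs, one can append a valid sequence from $(a^{b-1})$ to $(a^b)$ using at most $3$ additional one-linked pairs. To see this, note $|(a^b)| - |(a^{b-1})| = a$, so exactly one of the partitions in this incremental stretch must have sum equal to each intermediate value; the constraint is that $P_0$ can increase by at most $1$ per step, but here $P_0 = a$ is already correct, so every step merely fills in the new bottom row. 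Adding a full row of length $a$ below an $(a^{b-1})$ rectangle is exactly the kind of move handled by repeated displacement along a single long arithmetic progression (the diagonal meeting the two corners at the ends of the new row and the row above), plus a constant number of singleton-progression ($1$-linked) steps at the two ends to get started; I expect $3$ to be a safe bound, with the base cases $b=2$ (where one checks $\delta((a^2)) \le a+1$ directly, in line with the displayed table) handled separately.

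The main obstacle will be the book-keeping at the ``corners'': a displacement along an arithmetic progression $\Lambda$ only acts on corners of $P$ that actually lie on a diagonal line of $\Lambda$, and it acts on \emph{all} of them simultaneously, so I must be careful that the long progression used to extend a row does not inadvertently also turn a corner higher up in the already-built rectangle (which would spoil the targeted shape or change the sum by the wrong amount). Controlling this is precisely why the rectangular shape is used: its only outward corners are at the top-left and bottom-right, so a well-chosen $\Lambda$ interacts with at most those two, and the auxiliary $1$-linked steps are exactly what is needed to manufacture a second usable corner when only one is available. I would also need to double-check the inequality $|P^+_\Lambda| - |P^-_\Lambda| \le 2$ is respected at every $2$-linked step and that the sums are strictly increasing throughout; these are routine but must be verified for the explicit sequence. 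Summing the per-row cost $3$ over the $b-1$ inductive steps, plus the $b=2$ base cost $\le a+1$, gives the bound $a + 3(b-1) - 2 = a + 3b - 5$, as claimed.
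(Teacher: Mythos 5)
Your inductive step is where the argument breaks, and it breaks irreparably. Since linked pairs satisfy $P_1 = (P_2)^-_\Lambda \leq (P_2)^+_\Lambda$, every partition in a valid sequence contains the previous one, so any valid stretch from $(a^{b-1})$ to $(a^b)$ passes only through partitions of the form $(a^{b-1}\ j)$ with $0 \leq j \leq a$. But a single displacement changes each part by at most $1$, so each step in that stretch adds exactly one box (to the bottom row) and is therefore $1$-linked. The cost of appending a full bottom row to a completed rectangle is thus exactly $a$, not $3$, and your induction only yields a bound on the order of $ab$. The obstacle is precisely the one you flagged and then set aside: a $2$-linked step needs \emph{two} outward corners of the current partition lying on a common arithmetic progression, with both added boxes landing inside the target shape. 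Once rows $0$ through $b-2$ are complete, the only addable boxes are at position $(b-1, j)$ in the new row and at $(0,a)$ at the end of the top row --- and the latter lies outside $(a^b)$. There is no second usable corner, so no $2$-linked step is available at all. Your remark that the rectangle's ``only outward corners are at the top-left and bottom-right'' is exactly the problem, not the solution.

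The paper's proof circumvents this by never completing a row before opening the next two: it works with staircase partitions $P_{k,i} = (a^k\ (i+\tfrac12 a)\ i)$ (for $a$ even), which always have \emph{two} partial rows whose ends are offset by $\tfrac12 a$, hence lie on a common progression of modulus $\tfrac12 a + 1$. Each $2$-linked step then adds one box to each partial row, and the only parasitic corner the progression might hit (the inward corner at the end of row $0$, content $a$) interferes for at most one value of $i$ per $k$, costing at most two extra $1$-linked steps per pair of rows; the $1$-linked steps you cannot avoid are concentrated in the initial segment $P_{0,0}=(\tfrac12 a)$ and the final fill from $P_{b-1,0}$ to $(a^b)$, each costing $\tfrac12 a$. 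If you want to salvage a row-by-row induction you would need to restructure it so that the partition handed to the next stage still has an incomplete row available as a partner for $2$-linked moves, which is essentially what the staircase does.
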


The proof appears at the end of this section. This lemma, together with lemma \ref{diffToPoints} is sufficient to complete the proof of the main theorem.

\begin{proof}[Proof of theorem \ref{mainTheorem}]
Suppose that $g,d,r$ are integers such that $r \geq 1$, $g-d+r \geq 2$, and $0 > \rho \geq - \frac{r}{r+2} g + 3r - 3$. Let $a = g-d+r$, $b = r+1$, and $P = (a^b)$. Note that $a,b \geq 2$. By lemma \ref{twpToBN}, $\tilde{\cw}^r_{d,g}$ has a dimensionally proper point if and only if $\tilde{\cw}_g(P)$ has a dimensionally proper point. By lemmas \ref{diffToPoints} and \ref{boxlike}, it suffices to show the $g \geq \frac12 (ab + a + 3b -5)$. We may assume that $a,b \geq 3$ since the case $r=1$ is well-understood (by the duality mentioned in remark \ref{twpDuality}, the roles of $a$ and $b$ can be interchanged, so either can be taken to be $r+1$).

Now, $\rho \geq - \frac{r}{r+2} g + 3r -3$ is equivalent to $g - ab \geq \frac{b-1}{b+1} g + 3b - 6$, i.e. $\frac{2b}{b+1} g \geq ab + 3b - 6$. This is equivalent to $2g \geq  (a+3)(b+1) - \frac{6(b+1)}{b} = ab + a + 3b - 3 - \frac{6}{b}$. Since $b \geq 3$, this implies that $2g \geq ab + a + 3b - 5$, and the theorem follows.
\end{proof}

\begin{proof}[Proof of lemma \ref{boxlike}]
The proof will be by explicit construction of a sequence of partitions. First consider the case where \textbf{$a$ is even}. 

Define the following intermediate partitions: $P_{k,i} = (a^k\ (i+\frac12 a)\ i)$ (see figure \ref{figure:steps}), for $k \geq 0$ and $i \in \{0, 1, \cdots, \frac12 a\}$. 

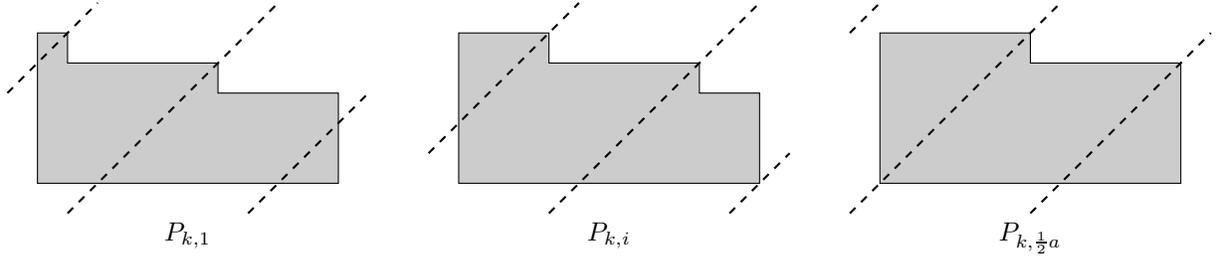
\begin{figure}
\begin{center}
\begin{tikzpicture}[scale=0.4]
\begin{scope}[xshift=-14cm]
\draw[fill=gray!40] (0,5) -- (1,5) -- (1,4) -- (6,4) -- (6,3) -- (10,3) -- (10,0) -- (0,0) -- cycle;
\draw[style=dashed,thick] (-1,3) -- (2,6);
\draw[style=dashed,thick] (1,-1) -- (8,6);
\draw[style=dashed,thick] (7,-1) -- (11,3);
\draw (5,-1) node[below] {$P_{k,1}$};
\end{scope}

\draw[fill=gray!40] (0,5) -- (3,5) -- (3,4) -- (8,4) -- (8,3) -- (10,3) -- (10,0) -- (0,0) -- cycle;
\draw[style=dashed,thick] (-1,1) -- (4,6);
\draw[style=dashed,thick] (3,-1) -- (10,6);
\draw[style=dashed,thick] (9,-1) -- (11,1);
\draw (5,-1) node[below] {$P_{k,i}$};

\begin{scope}[xshift=14cm]
\draw[fill=gray!40] (0,5) -- (5,5) -- (5,4) -- (10,4) -- (10,3) -- (10,3) -- (10,0) -- (0,0) -- cycle;
\draw[style=dashed,thick] (-1,5) -- (0,6);
\draw[style=dashed,thick] (-1,-1) -- (6,6);
\draw[style=dashed,thick] (5,-1) -- (11,5);
\draw (5,-1) node[below] {$P_{k,\frac12 a}$};
\end{scope}
\end{tikzpicture}
\end{center}
\caption{The intermediate partitions $P_{k,i}$ used in the proof of lemma \ref{boxlike}, together with the progressions $\Lambda_{k,i}$. The partition is it's own upward displacement for all values of $i$ except possibly one (shown in the middle).}
\label{figure:steps}
\end{figure}

Let $\Lambda_{k,i}$ denote the arithmetic progression generated by the two diagonals shows in figure \ref{figure:steps}. That is, $\Lambda_{k,i} = \{n:\ n \equiv i - k -2 \mod (\frac12a +1)\}$. Observe that if $1 \leq i \leq \frac12 a$, then $\Lambda_{k,i}$ does not meet the other outward-facing corner of the Young diagram, so it follows that

\begin{equation*}
\left( P_{k,i} \right)^-_{\Lambda_{k,i}} = P_{k,i-1} \mbox{ when } i>0.
\end{equation*}

Now consider the upward displacement. The only inward-turned corner that $\Lambda_{k,i}$ can meet is the one at the end of the first row of the Young diagram; this corresponds to the value $P_0 - 0 = a$.  From this we can conclude that

\begin{equation*}
\left( P_{k,i} \right)^+_{\Lambda_{k,i}} = P_{k,i} \mbox{ unless } k>0 \mbox{ and } a \equiv i - k -2 \mod (\frac12 a + 1).
\end{equation*}

For a fixed positive value of $k$, there is at most one value $i \in \{1, 2, \cdots, \frac12 a\}$ such that congruence above holds. Therefore the sequence of partitions

\begin{equation*}
P_{k,0} < P_{k,1} < \cdots < P_{k,\frac12a}
\end{equation*}

is nearly a valid sequence of partitions; at most one adjacent pair is invalid. By inserting an intermediate partition at that place (if necessary), we obtain a valid sequence of partitions with at most two steps increasing the sum by only $1$. Therefore $\delta(P_{k,\frac12a}) \leq 2 + \delta(P_{k, 0})$. For $k=0$, the original sequence is valid, so $\delta(P_{0,\frac12a}) \leq \delta(P_{0, 0})$.

Since $P_{k,\frac12a} = P_{k+1,0}$, it follows from this analysis that

\begin{equation*}
\delta(P_{b-1,0}) \leq 2(b-2) + \delta(P_{0,0}).
\end{equation*}

Now, $P_{b-1,0} \leq (a^b)$ with $|(a^b)| - |P_{b-1,0}| = \frac12 a$ and $|P_{0,0}| = \frac12 a$. From this it follows (by a sequence of displacements along singleton progressions) that

\begin{equation*}
\delta((a^b)) \leq a + 2b -4 \mbox{ when $a$ is even.}
\end{equation*}

Now, if $a$ is odd, then $\delta(((a-1)^b)) \leq a + 2b - 5$, and $((a-1)^b)$ can be linked to $(a^b)$ by a length $b$ sequence of length $b$. Therefore

\begin{equation*}
\delta((a^b)) \leq a + 3b - 5 \mbox{ when $a$ is odd.}
\end{equation*}

So whether $a$ is even or odd, $\delta((a^b)) \leq a + 3b - 5$.
\end{proof}

\end{document}